\documentclass[a4paper,12pt]{article}

\usepackage[T1]{fontenc}
\usepackage{lmodern}
\usepackage[margin=1in]{geometry}
\usepackage{amsmath,amssymb,amsthm}
\usepackage{array,booktabs,enumitem,microtype,graphicx}
\usepackage{xcolor}
\usepackage{tikz}
\usepackage[hidelinks]{hyperref}
\usepackage[normalem]{ulem}
\newtheorem{proposition}{Proposition}
\newtheorem{corollary}[proposition]{Corollary}

\hypersetup{pdftitle={Risk-Constrained MPPI Tuning with Update-Aware \texttt{MA-ES} Racing},pdfsubject={Risk-constrained MPPI tuning with update-aware statistical racing},pdfkeywords={MPPI, \texttt{MA-ES}, statistical racing, chance constraints}}

\begin{document}

\begin{center}
	
	{\Large \bf Risk-Constrained MPPI Tuning\\with Update-Aware \texttt{MA-ES} Racing} 
	
	\vspace{0.5cm}

	{\large \bf Morteza Kimiaei$^{a}$, Roya Khalili-Amirabadi$^{b}$, 
		Mohsen Jalaeian-Farimani$^{c}$}

	{$^a$ \sl Faculty of Mathematics, University of Vienna, 1090 Vienna, Austria}
	\centerline{\sl Email: \href{mailto:kimiaeim83@univie.ac.at}{kimiaeim83@univie.ac.at}}
	\centerline{\sl WWW: \url{http://www.mat.univie.ac.at/~kimiaei}}
	\\
	{$^b$ \sl Department of Applied Mathematics, Ferdowsi University of Mashhad, Mashhad, Iran}
	\centerline{\sl Email:
	\href{mailto:roya.khalili.a@gmail.com}{roya.khalili.a@gmail.com}}\\
		{$^c$ \sl Department of Electronics, Information and Bioengineering (DEIB), Politecnico di Milano, Milan, Italy}
		\centerline{\sl Email:
	\href{mailto:mohsen.jalaeian@polimi.it}{mohsen.jalaeian@polimi.it}}
	\centerline{\sl WWW: \url{https://www.deib.polimi.it/eng/people/details/2012404}}
	\\

\end{center}

{\bf Abstract.}
We formulate offline tuning of Model Predictive Path Integral (\texttt{MPPI}) controllers as a chance-constrained black-box optimization problem and develop an update-aware framework based on Matrix Adaptation Evolution Strategy (\texttt{MA-ES}). We use statistical information from noisy controller evaluations to determine when the evolutionary update is sufficiently reliable. When the recombination direction is certified, a derivative-free nonmonotone line search (\texttt{NMLS}) is used to refine the candidate solution, and a separate validation stage assesses the final frozen controller. We establish a worst-case arithmetic-complexity bound for both online \texttt{MPPI} control and offline tuning. Numerical experiments on \texttt{TurtleBot~4}, \texttt{Jackal-like}, and \texttt{Ackermann} models show that the proposed \texttt{NMLS} refinement is competitive with standard \texttt{MA-ES}, transfers across the three vehicle models without model-specific retuning, and improves several cost and validation measures. Both \texttt{MA-ES} variants also compare favorably with the tested randomized derivative-free line search (\texttt{VRBBO}) configuration. The results support the practical applicability and transferability of the
proposed framework, while not claiming universal superiority or hardware-level safety.

\noindent\textbf{Keywords:} \texttt{MPPI}; \texttt{MA-ES}; nonmonotone derivative-free line search, chance-constrained controller tuning;
statistical racing; rank uncertainty; finite-sample validation.


\section{Introduction}

\label{sec:introduction}

Black-box optimization (BBO)  addresses problems in which objective-function values
can be obtained for candidate solutions, but useful analytic derivatives or
an explicit, exploitable model of the objective are unavailable. Such problems
arise naturally in numerous applications across science, engineering, industry,
and chemistry, particularly when objective evaluations require simulations,
physical experiments, or closed-loop executions rather than direct evaluations
of known analytic functions. Derivative-free optimization (DFO) methods provide
a general framework for addressing these problems by guiding the search using
function evaluations without requiring exact derivative information.
Comprehensive treatments of DFO methods, their theoretical foundations, and
practical applications can be found in the books by Audet and Hare~\cite{AudH}
and Conn et al.~\cite{ConSV}, as well as the survey by Larson
et al.~\cite{LarMW}. Furthermore, the works of
Mor{\'e} and Wild~\cite{MorW}, Rios and Sahinidis~\cite{RioS},
Kimiaei~\cite{VRDFON}, and Kimiaei and Neumaier~\cite{kimiaei2024madfo,kimiaei2021vrbbo,kimiaei2025matrs}
provide numerical investigations and comparative assessments of
DFO algorithms for problems with continuous and mixed-integer variables,
offering insights into their computational performance and practical behavior.

In many practical settings, black-box evaluations are also subject to
uncertainty. Deterministic inaccuracies may arise from modeling,
discretization, or numerical approximation, whereas stochastic variability
may result from simulation randomness, measurement noise, or other random
perturbations. Consequently, repeated evaluations of the same candidate
may produce different objective values, and an apparently superior observation
need not indicate a genuinely better underlying solution. These difficulties
are particularly pronounced when evaluations are computationally expensive
and only a limited evaluation budget is available. The central challenge is
therefore not merely to identify promising candidates efficiently, but also
to distinguish genuine improvements in objective performance from variability
in the observed function values, while making effective use of the available
evaluation budget.

In this black-box setting, the performance of a Model Predictive Path Integral
(\texttt{MPPI}) controller depends strongly on a small set of tuning parameters,
yet candidate settings can be judged only through noisy and potentially expensive
closed-loop trials. In our setting, stochastic variability arises from randomized
initial robot states, obstacle perturbations, observation and actuation
uncertainties, and the Gaussian sampling used by the \texttt{MPPI} controller.
Consequently, even with fixed controller parameters, different realizations
of these uncertainties can produce different closed-loop trajectories, task
costs, and safety outcomes. Our goal is to tune these parameters without
confusing random trial variation with genuine improvement, while keeping
safety explicit. We address this by combining full-trial evaluation,
uncertainty-aware evolutionary search, and independent final validation.

\texttt{MPPI} control evaluates sampled command sequences with a prediction
model and constructs a receding-horizon command update by importance-weighted
averaging. The parallel-computation formulation of Williams
et al.~\cite{williams2017} and the information-theoretic treatment of Williams
et al.~\cite{williams2018} established the foundations used here. Their
contributions concern the online optimal-control computation; they do not make
the cost weights, sampling covariance, or temperature immaterial to closed-loop
behavior. The question considered in this paper is how to select these
controller parameters from noisy, complete-task evaluations when the
probability of a specified safety failure is itself a design constraint.

Within this derivative-free setting, an evolution strategy iteratively updates
a probability distribution over candidate solutions through mutation, selection,
and recombination. Matrix Adaptation Evolution Strategy (\texttt{MA-ES}) is a
derivative-free evolution strategy in which the search distribution is adapted
through a global scale and a transformation matrix~\cite{beyer2017}. In each
generation, mutation generates random candidate perturbations from the current
search distribution, selection ranks the resulting candidates according to their
evaluated performance, and recombination forms a weighted combination of the
selected mutation directions. The resulting information updates the search mean,
while the evolution path, transformation matrix, and global scale adapt the
location, shape, and size of the distribution used to generate the next
population. Thus, \texttt{MA-ES} repeatedly alternates between generating
candidates, selecting the better ones, and adapting the search distribution.

To enhance the recombination step of this evolutionary search, we incorporate
ideas from derivative-free line-search methods. Such methods provide
globalization mechanisms for BBO, with specialized variants
designed to accommodate noisy function evaluations; representative examples
include \texttt{SDBOX}~\cite{LucS},
\texttt{VRBBO}~\cite{kimiaei2021vrbbo}, and
\texttt{VRDFON}~\cite{VRDFON}, with broader discussions given, for example, in
\cite[Section~2.3.4]{LarMW}. These methods evaluate trial
points along prescribed search directions and accept them according to
suitable decrease conditions. In fully derivative-free variants,
sufficient-decrease conditions can be formulated using positive forcing
functions without requiring directional-derivative estimates, thereby
avoiding the sensitivity of finite-difference approximations to noisy
function values. An important ingredient in several such methods is
\emph{extrapolation}: once a trial point is successful, the step length is
enlarged to explore farther along the same direction and potentially leave
flat or unfavorable regions more rapidly. Under noise, however, strict
monotone decrease can be unreliable because a single favorable or unfavorable
evaluation may be caused by perturbations rather than genuine progress.
Nonmonotone line-search strategies therefore compare new trial points with
a relaxed reference value rather than requiring decrease relative only to
the immediately preceding iterate; both deterministic and randomized
constructions have been studied extensively
\cite{LucS,AhoA,AmiEK,AmKK,iam:Birgin99,DinizEhrhardt2008,Grippo2014,grippo1986nonmonotone,Kim17,kimiaei2022new,KimR,kimiaeia2021trust,iam:Toint96}.
Motivated by these ideas, the present method augments the
\texttt{MA-ES} recombination phase with a derivative-free nonmonotone
line-search procedure along the selected recombination direction, allowing
both under-relaxed and extrapolated recombination steps.

\paragraph{Online proposal adaptation and command regularity.}
The connection between path-integral policy search and evolutionary
optimization predates the present application. Stulp and Sigaud's
PI$^2$-CMA adapts exploration covariance in Path Integral Policy Improvement,
and explicitly relates that policy-search procedure to the cross-entropy
method and covariance matrix adaptation~\cite{stulp2012}. PI$^2$-CMA is
not an outer-loop \texttt{MPPI} hyperparameter tuner, but it rules out claiming that
combining path-integral optimization with covariance adaptation is new.
More directly, Asmar et al.~\cite{asmar2023} formulate a joint distribution
over the command sequence and introduce adaptive importance sampling within
\texttt{MPPI}. Their Model Predictive Optimized Path Integral (\texttt{MPOPI}) strategies
include cross-entropy and covariance-matrix-adaptation evolution-strategy
updates of the online proposal. Keshavarz et al.~\cite{keshavarz2025}
subsequently investigate this \texttt{MPPI--CE--CMA} combination for whole-body
legged-robot control. These works optimize the distribution of commands
\emph{inside} each receding-horizon calculation, rather than selecting one
fixed vector of controller settings from complete navigation trials.

Other improvements address different properties of the online planner.
Smooth \texttt{MPPI} combines input lifting with an action-variation cost to suppress
chattering without an external smoothing filter~\cite{kim2022}.
Stein Variational Guided \texttt{MPPI} uses modified Stein variational updates to
identify a promising mode and guide the proposal when the target action
distribution is multimodal~\cite{honda2024}. Neither smoothing nor mode
selection is introduced here. We retain a specified Gaussian \texttt{MPPI} update,
including its density-ratio correction, so that the proposed statistical
evaluation mechanism is not confounded with a change of online solver.

\paragraph{Outer-loop tuning and adaptive controller parameters.}
An additional optimization layer around \texttt{MPPI} is also established prior art.
Goldfain's doctoral dissertation describes cost-function optimization on the
time scale of complete racing laps around an \texttt{MPPI} controller~\cite{goldfain2019}.
This provides an architectural precedent for an outer optimization loop. Ruhe et al.~\cite{ruhe2025}
develop Bayesian hyperparameter optimization using an Isaac Sim digital twin
and a ROS2/Nav2 implementation. Their author-maintained project description
explicitly includes batch size, control noise, and temperature among the
optimized settings~\cite{ruheproject}. Consequently, neither black-box \texttt{MPPI}
tuning nor jointly considering temperature and sampling parameters is a
sufficient novelty claim. The public \texttt{pytorch\_mppi} software also
provides \texttt{CMA-ES} and Bayesian-optimization-based autotuning, including
covariance, horizon, and temperature parameters~\cite{pytorchmppi}.
This is software prior art and is cited as such, rather than as a research
paper or evidence of comparative performance.

Adaptation can additionally be embedded in the control architecture.
Kim and Lee~\cite{kimlee2024} adjust \texttt{MPPI} cost weights from predicted states
in a multiconstraint guidance problem; their contribution is adaptive
weighting rather than \texttt{MA-ES}-based offline selection. Huang et
al.~\cite{huangpg2026} study advantage-actor--critic-guided \texttt{MPPI} parameter
adaptation for mobile robots. Their separate \texttt{ACW-MPPI}
study~\cite{huangacw2026} concerns a policy-embedded framework for real-time
mobile-robot navigation. Beyond \texttt{MPPI}-specific adaptation, reinforcement-learning-based
approaches have also been investigated for unmanned ground
vehicle navigation. For example, Farimani et al.~\cite{11108832}
developed a fuzzy Q-learning framework incorporating a
fuzzified Bellman equation for UGV navigation and path planning.
These studies illustrate the broader applicability of
learning-based methods to robotic navigation and control.
Our contribution, however, focuses on the risk-aware statistical
evaluation and ranking of fixed \texttt{MPPI} parameter configurations
and their influence on evolutionary search updates.
More closely related to \texttt{MPPI}, the \texttt{RC-MPPI} preprint
of Yoon and Kim~\cite{yoon2026} links online constraint
tightening, safety-cost shaping, and temperature relaxation
to prediction--execution residuals. In contrast, our framework
keeps the \texttt{MPPI} controller parameters fixed throughout each
trial and during deployment, while adapting only the offline
evaluation, ranking, and selection mechanisms.

\paragraph{Risk-aware planning and statistical validation.}
Risk treatment within \texttt{MPPI} must be distinguished from the reliability of
an offline tuning decision. Yin et al.~\cite{yin2023} integrate Conditional Value-at-Risk
(\texttt{CVaR}) into \texttt{MPPI} to account for unfavorable trajectory
outcomes under uncertainty. \texttt{CVaR} concerns the tail of a loss distribution,
whereas the chance constraint studied here concerns the probability of a
specified binary failure event over an entire closed-loop trial. Trevisan et al.~\cite{trevisan2025}
approximate joint collision probabilities across dynamic obstacles with
Monte Carlo sampling for navigation in crowds. The \texttt{DUCCT-MPPI} preprint of
Serfling et al.~\cite{serfling2026} combines robot-state and dynamic-object
prediction uncertainty and evaluates collision-risk calibration using
proper scoring rules. These contributions establish that risk-aware \texttt{MPPI},
Monte Carlo collision-risk evaluation, and calibration assessment already
exist. Our confidence statements concern repeated \emph{complete-trial}
outcomes of frozen candidate controllers, not the correctness of an online
collision predictor. They are conditional on the stated scenario law and
do not replace robust control, a safety filter, or a model-validity argument.

\paragraph{Constrained black-box search and statistical racing.}
The outer optimization ingredients likewise have independent precedents.
Beyond the established \texttt{MA-ES} framework~\cite{beyer2017},
feasibility-first evolutionary selection is established in constrained
optimization~\cite{deb2000}, and constrained Bayesian optimization models
both an expensive objective and expensive constraints~\cite{gardner2014}.
Most importantly, Heidrich-Meisner and Igel~\cite{heidrich2009} already
combine \texttt{CMA-ES} policy search with Hoeffding and Bernstein races to adapt
the number of policy evaluations needed for reliable selection. Therefore,
adaptive resampling, confidence-based evolutionary ranking, and using an
\texttt{ES} for noisy control-policy evaluation are not claimed as new here.

Hansen et al.~\cite{hansen2009} quantify uncertainty through population
rank changes and apply uncertainty-handling \texttt{CMA-ES} to experimental feedback
controller tuning. More recently, Wang and Lu~\cite{wang2026} replace hard
rank weights with conditional expected weights, estimated by residual
bootstrapping, to trade ranking fidelity against optimization depth. Their
ICML 2026 study provides a particularly relevant fixed-budget comparison
point because it argues that excessive within-generation denoising can
reduce the number of useful optimizer updates. Thus, linking rank
uncertainty to an evolutionary update is not itself a new principle; the
specific object bounded here is the discrepancy of a hard-rank,
risk-constrained \texttt{MA-ES} update.

\paragraph{Advanced matrix-adaptation methods for noisy BBO.}
Beyond the basic \texttt{MA-ES} framework, more elaborate matrix-adaptation
methods have been developed specifically for noisy DFO. Kimiaei and Neumaier~\cite{kimiaei2024madfo} proposed
\texttt{MADFO}, an extension of fast matrix-adaptation evolution strategies
designed for noisy continuous unconstrained optimization. In addition to the
usual mutation, selection, and recombination phases, \texttt{MADFO} modifies
the generation of mutation and recombination steps to avoid steps that are
either excessively small or excessively large, constructs recombination
subspace directions from current and previous search information, and
supplements recombination by a derivative-free nonmonotone line search.
In particular, extrapolation is performed along a recombination direction in
an attempt to identify a point with a lower inexact objective value than the
standard recombination point. A randomized nonmonotone reference value is
used to reduce sensitivity to spuriously favorable noisy evaluations, while
additional triangle-subspace sampling provides a fallback when the line search
does not identify a satisfactory new point. Thus, \texttt{MADFO} preserves the
rank-based matrix-adaptation mechanism while augmenting the search for useful
mutation and recombination points with derivative-free globalization
strategies appropriate for noisy objectives.

A further development is the matrix-adaptation trust-region strategy
\texttt{MATRS} of Kimiaei and Neumaier~\cite{kimiaei2025matrs}, which targets
noisy bound-constrained DFO with mixed-integer variables.
\texttt{MATRS} combines matrix adaptation with derivative-free line searches,
trust-region steps, and dedicated mixed-integer searches. Its main cycle
contains mutation, selection, recombination, trust-region, and mixed-integer
phases. Line searches are performed along mutation and recombination directions
and, when appropriate, along their opposite directions; extrapolation is used
to search beyond an initially successful trial point. If these searches do not
produce a new best point, trust-region models provide additional trial
directions, while a separate mixed-integer phase handles discrete and
continuous variables jointly. The method also employs space-filling directions
and admits a reentrant implementation intended for expensive algorithm-tuning
problems in which objective values are supplied by an external simulation or
oracle. Hence, \texttt{MADFO} and \texttt{MATRS} demonstrate that substantially
more sophisticated search and globalization mechanisms can be built around
matrix adaptation for noisy black-box objectives. However, the numerical
studies reported for these methods concern generic derivative-free benchmark
problems rather than robotic closed-loop control or navigation. To the best of
our knowledge, these advanced matrix-adaptation variants have not yet been
investigated for tuning an \texttt{MPPI} controller under repeated robotic
closed-loop trials. Moreover, neither method addresses the particular issue
considered here: converting uncertainty in full-trial failure probabilities
and costs into a quantitative bound on the resulting risk-constrained
\texttt{MA-ES} update.

\paragraph{Specific research gap.}
The remaining design question addressed in this paper is narrower: how can
uncertainty about both full-trial failure probability and task cost be
translated into a quantitative bound on the \emph{actual \texttt{MA-ES} update}
used to tune \texttt{MPPI}? An empirical feasibility-first ordering does not answer
this question. Sampling error near the failure threshold can move a
candidate across the feasibility boundary; uncertainty within the elite
set also changes a rank-weighted update even when elite membership is
unchanged. Unlike conditional averaging over noisy ranks, the proposed
rule retains a hard-rank update and bounds its discrepancy from the true
\emph{risk-constrained} population ordering, uniformly over declared looks.
The proposed construction connects simultaneous statistical
comparisons to admissible rank ranges and then to perturbation bounds for
the \texttt{MA-ES} mean, evolution path, transformation matrix, and logarithmic
scale. Evaluation can stop when those update bounds meet prescribed
tolerances, without resolving comparisons that do not materially affect
the update. The rank-to-update argument is agnostic to the particular
confidence interval used: the finite-look Clopper--Pearson/Hoeffding
construction below is one conservative instantiation, and any tighter
replacement must preserve the stated simultaneous-coverage property. This
is an explicit synthesis and analysis of established building blocks, not
a claim that any individual building block, all \texttt{MPPI} tuning, or all
evolutionary racing is new.

\paragraph{Contributions and scope.}
We develop an update-aware, risk-constrained \texttt{MA-ES} framework for
tuning \texttt{MPPI} controllers from noisy complete-task evaluations. The
overall procedure is as follows. Candidate controllers are evaluated under a
chance-constrained full-trial criterion; statistical confidence bounds are
used to determine which population orderings remain admissible; these
orderings are propagated to the corresponding \texttt{MA-ES} update
quantities; and, when the resulting recombination direction is certified, a derivative-free nonmonotone line-search refinement is applied before the next
population is generated. The final frozen controller is then assessed by an
independent fixed-sample validation rule.

The main features are:
\begin{itemize}
	\item A full-trial chance-constrained formulation jointly tunes the
	\texttt{MPPI} planning weights, temperature, and sampling scales while
	keeping the online computational budget fixed and evaluating performance
	with an external task metric that is distinct from the tuned planning
	score.
	
	\item A finite-look statistical evaluator combines exact binomial confidence
	intervals with bounded-cost comparisons to construct a
	confidence-supported partial ordering of the population. The admissible
	orderings are then mapped to explicit bounds on the standard
	\texttt{MA-ES} update quantities, yielding an update-aware stopping
	criterion for the rank-based evolutionary recursion.
	
	\item Once the recombination direction is rank-certified, we refine the
	nominal \texttt{MA-ES} recombination point by a derivative-free
	nonmonotone line search adapted from the recombination line-search strategy
	of Kimiaei and Neumaier~\cite{kimiaei2024madfo}. The additional
	randomization in the original nonmonotone reference is removed and replaced
	by a deterministic mixing rule. The method evaluates extrapolated points
	along the certified recombination direction and uses a derivative-free
	forcing condition together with the outer risk-first comparison to select
	either the nominal recombination point or an accepted extrapolated point.
	No derivative information or additional random direction reversal is used.
	
	\item A separate fixed-sample validation stage controls the
	false-acceptance probability of the final frozen controller. The binomial
	and concentration tools are classical~\cite{clopper1934,hoeffding1963};
	the contribution lies in their integration with the chance-constrained
	\texttt{MPPI} tuning problem, the risk-to-rank-to-\texttt{MA-ES}-update
	connection, and the certified derivative-free refinement of the
	recombination point.
	
	\item A worst-case arithmetic-complexity analysis quantifies the cost
	of online \texttt{MPPI} updates and complete closed-loop trials,
	as well as the total offline tuning cost, including matrix adaptation,
	statistical racing, and nonmonotone line-search evaluations.
	The resulting bounds explicitly characterize the dependence on
	the sampling budget, prediction horizon, and optimizer dimensions,
	without asserting real-time feasibility or an optimization
	convergence rate.
\end{itemize}

Unlike the randomized nonmonotone line-search scheme of Kimiaei and
Neumaier~\cite{kimiaei2024madfo}, developed for noisy DFO, the present approach employs a deterministic construction
of the nonmonotone reference. This choice does not assume that the closed-loop objective evaluations
have negligible noise. Rather, stochasticity is already present in the
scenario evaluations and the mutation mechanism of \texttt{MA-ES}, while
common random numbers are used to improve the consistency of candidate
comparisons. By fixing the nonmonotone parameters and updating the
reference through a deterministic memory rule, we avoid introducing an
additional source of algorithmic randomization and obtain a reference construction that is reproducible conditional on
the evaluated population costs. This provides a controlled setting for assessing
the contribution of the nonmonotone recombination line search under a
limited evaluation budget.

The rank-to-update analysis applies directly to the standard \texttt{MA-ES}
quantities induced by the admissible population orderings. When the line
search is activated, rank certification guarantees that the chosen and true
orderings produce the same recombination direction; conditional on the
realized population and stored scenarios, the subsequent nonmonotone
reference, extrapolation rule, and final selection are deterministic.
Therefore, the line-search refinement introduces no additional
rank-induced discrepancy, while the evolution-path, transformation-matrix,
and scale updates remain those of the standard \texttt{MA-ES} recursion.

We provide the construction, assumptions, proofs, and numerical studies on
the three kinematic vehicle models considered in this paper. The
update-aware study evaluates independent optimization seeds against the
pre-tuning initialization, while a separate eleven-seed large-budget
experiment compares standard \texttt{MA-ES} with the tuned
\texttt{NMLS} variant. In that comparison the line search is activated after
every generation and is therefore an algorithmic ablation rather than a test
of the rank-certified activation rule. We do not report hardware
experiments, prove global optimization or closed-loop stability, or claim
universal empirical superiority; the numerical conclusions are restricted
to the declared models, budgets, and evaluation protocol.

\paragraph{Organization of the paper.} The remainder of the paper is organized as follows.
We first formulate the risk-constrained controller-tuning problem in Section \ref{sec:probdef},
then present the proposed algorithm in Section \ref{sec:algorithm} and its mathematical analysis in Section \ref{sec:mathform}, including the statistical guarantees and computational complexity in Section \ref{sec:computational-complexity}.
Numerical experiments assess the framework across three kinematic robot models in Section \ref{sec:simulation}, followed by conclusions and limitations in Section \ref{sec:conclusion}.

With the motivation, prior work, contribution, and scope established, we next state the controller-tuning problem independently of the numerical procedure used to solve it.


\section{Problem definition and design choices}\label{sec:probdef}

Before constructing the tuning algorithm, we must specify exactly what is being optimized and what constitutes a safety failure. The goal of this section is to separate the controller-design problem from the numerical method used to solve it. We therefore define the complete-trial performance criterion, the safety requirement, and the controller parameters to be tuned.

Consider a robot governed by a parameterized controller $\pi_\theta$, with
controller parameters $\theta\in\Theta\subseteq\mathbb{R}^n$. The robot must
complete a task under variations in initial conditions, environments, sensing,
and motion. We represent all sources of randomness affecting a complete trial,
including environmental uncertainties and the internal sampling
of the controller, by a random scenario $\omega\sim\mathcal{D}$. A trial produces a trajectory and a scalar cost
$C(\theta,\omega)$. A separate failure indicator
$I_{\mathrm{fail}}(\theta,\omega)\in\{0,1\}$ records whether the robot
violated a safety requirement during the trial. For fixed controller
parameters $\theta$, both the observed cost and the failure indicator
may vary across independent realizations of $\omega$. Consequently,
the expected cost and failure probability are not directly available
to the optimization algorithm and must be estimated from finitely
many complete closed-loop trials.

The design problem is
\begin{equation}
\label{eq:problem}
\begin{aligned}
\underset{\theta\in\Theta}{\operatorname{minimize}}\quad
&J(\theta):=\mathbb{E}_{\omega\sim\mathcal{D}}[C(\theta,\omega)]\\
\text{subject to}\quad
&p_{\mathrm{fail}}(\theta)
:=\mathbb{P}_{\omega\sim\mathcal{D}}
  \bigl(I_{\mathrm{fail}}(\theta,\omega)=1\bigr)\leq\delta,
\end{aligned}
\end{equation}
where $\delta$ is the maximum acceptable failure probability. The global
minimizer of~\eqref{eq:problem} defines an ideal target, but a finite set of
robot trials does not generally identify or certify it. The operational goal
is to find a controller that meets stated performance and safety criteria on
independent trials. The formulation distinguishes the robotics problem from
the numerical procedure used to search for a solution.

Here, the controller is specialized to
$\pi_\theta=\pi^{\mathrm{\texttt{MPPI}}}_\theta$.
The vector $\theta$ contains planner cost weights, a temperature, and
sampling scales; it does not contain the robot's entire command sequence.
During a trial, $\theta$ is fixed and \texttt{MPPI} replans from each new state
estimate. Between trials, \texttt{MA-ES} changes $\theta$ according to measured
closed-loop performance. Thus the extension changes the realization of
the controller and its parameter mapping while retaining the outer
optimization problem and the standard rank-based \texttt{MA-ES} path,
matrix, and scale recursions; the recombination point may additionally
be refined by the line-search mechanism introduced below. The evaluation and selection steps will be
strengthened by statistically controlled, update-aware racing.

\subsection{Choice of the outer-loop optimizer}

Closed-loop costs can be nonsmooth or available only through simulation,
which motivates derivative-free search. \texttt{MA-ES} provides correlated search
directions through a full matrix without explicitly maintaining and
factorizing a separate covariance matrix~\cite{beyer2017}. Its storage is
$O(n^2)$, and its trial cost can dominate this algebra for low-dimensional
controller tuning. We use it as a concrete rank-weighted optimizer, not
because its superiority over \texttt{CMA-ES}, cross-entropy, or Bayesian optimization
has been established. The statistical comparison construction is not
exclusive to \texttt{MA-ES}; its update-error formulas below are specialized to the
retained \texttt{MA-ES} equations.

Having fixed the design objective and the outer optimizer, we next summarize
the complete tuning procedure before introducing its mathematical details.


\section{Mathematical formulation and algorithmic analysis}\label{sec:algorithm}

The preceding section defines the tuning problem; the next question is how the complete procedure operates. The goal here is to give a compact roadmap from problem specification to final controller validation. We organize the method into six steps, whose precise definitions and guarantees are developed in the following section.

	\noindent\textbf{Inputs:} \texttt{MPPI} and robot models; a fixed scenario law
	$\mathcal D$; a task metric and binary safety event;
	$0<\delta,\alpha_s,\alpha_v<1$; positive integers
	$G_{\max},\lambda,N,H,B,R,L_{\mathrm{look}}$; strictly increasing positive integers
	$K_1<\cdots<K_{L_{\mathrm{look}}}$; nonnegative update tolerances; and all remaining
	problem parameters satisfying the positivity and boundedness conditions
	stated below. These conditions define the admissible configuration of the
	method and must be checked by any implementation before execution.

\medskip\noindent
\textbf{Algorithm 1: Update-aware \texttt{MA-ES} tuning of \texttt{MPPI}}
\begin{enumerate}[leftmargin=1.6em,itemsep=2pt,topsep=4pt]
	\item \textbf{(S1)} Specify the fixed task metric, full-trial safety event, scenario law,
	parameter map, and common online budget $(H,B,R)$.
	\item \textbf{(S2)} Initialize the standard \texttt{MA-ES} state. At each generation, sample the
	population and keep these candidates fixed throughout that generation.
	\item \textbf{(S3)} Evaluate all candidates on a fresh shared scenario stream, extending
	its prefix only to predeclared sample sizes $K_1,\ldots,K_{L_{\mathrm{look}}}$.
	\item \textbf{(S4)} At each look, form simultaneous risk and cost-difference intervals,
	construct the certified partial order, and choose an empirical-priority
	linear extension. Compute the induced bounds on the four \texttt{MA-ES} updates.
\item \textbf{(S5)} Stop the race when all update tolerances
hold or the generation's budget is exhausted. Using the chosen order, form
the \texttt{MA-ES} recombination direction and compute the standard
evolution-path, matrix, and scale updates. When the recombination direction
is rank-certified, refine the recombination point by the nonmonotone
derivative-free line search; otherwise retain the standard recombination
point. Record the bounds and stopping status, and repeat while the search
budget permits.
	\item \textbf{(S6)} Select on separate development data. Freeze one controller and assess
	it once on $N$ untouched trials using a one-sided binomial upper bound.
\end{enumerate}

This six-step outline provides the roadmap for the detailed construction that follows. The fixed-$K$, empirical-ranking variant provides a natural matched-budget baseline for assessing the statistical evaluator, but a separate fixed-$K$ ablation is not part of the numerical study reported below. The finite-look confidence bounds used below are deliberately simple and fully distribution-free; the subsequent rank-to-update construction only requires that any substituted bounds provide the same simultaneous-coverage property. The \texttt{MPPI} horizon $H$, sample count $B$, and refinement count $R$ differ
from the \texttt{MA-ES} population size $\lambda$, outer sample counts $K_\ell$ ($\ell=1,\ldots,L_{\mathrm{look}}$),
and complete-trial horizon $T$.


\section{Mathematical formulation and algorithmic analysis}\label{sec:mathform}

The high-level algorithm identifies the six stages of the method, but its statistical and optimization guarantees depend on how those stages are defined. The goal of this section is to make each step precise and to connect trial uncertainty to the resulting optimizer update. We follow Steps (S1)--(S6) in exactly the same order as Algorithm~1.

\subsection{(S1): Define a robot rollout and its cost}

	A fair tuning procedure first requires every candidate to be judged by the
	same definition of task performance and safety. The goal of this step is
	therefore to define one complete robot trial independently of the optimizer.
	We specify the closed-loop rollout, the fixed task cost, the failure event,
	and the \texttt{MPPI} policy that generates the commands.

Let $x_k$ be the robot state, $o_k$ its observation, and $u_k$ the control
applied at time $k$. For all $k=0,\ldots,T-1$, a general closed-loop model is
\begin{equation}
\label{eq:robot}
x_0\sim\rho_0(\cdot\mid\omega),\ \
o_k=h_\omega(x_k,\eta_k),\ \
u_k=\operatorname{sat}_{\mathcal U}
       \bigl(\pi_\theta(o_{0:k})\bigr),\ \
x_{k+1} =F_\omega(x_k,u_k,\xi_k).
\end{equation}
Here, $T$ is the rollout horizon, $\rho_0$ the initial-state distribution,
$\eta_k$ observation uncertainty, $\xi_k$ motion uncertainty, and
$\mathcal U$ the admissible actuator set. A memoryless controller may use
only $o_k$ instead of the full history $o_{0:k}$. The model and scenario
distribution must represent the operating conditions relevant to the task.
For \texttt{MPPI}, the policy also maintains a warm-start command sequence and uses
a state estimate derived from $o_{0:k}$. The scenario $\omega$ includes
all environment disturbances and independent planner random seeds, so that
$C(\theta,\omega)$ also accounts for finite-sample \texttt{MPPI} variability.

	Let $\mathcal E_k(\theta,\omega)$, $k=0,\ldots,T$, denote the prescribed
	measurable event that a safety failure has occurred along the closed-loop
	trajectory up to state $x_k$, including the executed transitions leading to
	that state. Define the stopping time and complete-trial failure indicator by
	\begin{equation}
		\label{eq:failure}
			\tau(\theta,\omega)
			=
			\min\!\left(
			\{k\in\{0,\ldots,T\}:
			\mathcal E_k(\theta,\omega)\}\cup\{T\}
			\right),\ \
			I_{\mathrm{fail}}(\theta,\omega)
			=
			\mathbf 1_{\{\mathcal E_T(\theta,\omega)\}}.
	\end{equation}
	The event $\mathcal E_k$ may depend on sampled states and on executed
	transitions. For the mobile-robot instance below, it includes intersection
	of an executed motion segment with the inflated obstacle geometry. The trajectory cost is
\begin{equation}
\label{eq:rollout-cost}
C(\theta,\omega)=
\sum_{k=0}^{\tau(\theta,\omega)-1}\ell_\omega(x_k,u_k)
+\Phi_\omega(x_{\tau(\theta,\omega)})
+L_{\mathrm{fail}}I_{\mathrm{fail}}(\theta,\omega).
\end{equation}
Here, $L_{\mathrm{fail}}\ge0$ is a fixed failure penalty.
The running cost $\ell_\omega$ describes progress and control expenditure;
the terminal cost $\Phi_\omega$ describes the final task outcome. The
failure indicator is retained separately even if a failure penalty is
included in the cost: a penalty alone cannot establish a safety constraint. In the reported implementation, a trial terminates at the first
detected collision or at the prescribed time horizon. Entering
the goal region does not terminate the trial; goal completion is
evaluated at the final simulated state.

\subsubsection{\texttt{MPPI} as the online planning policy}

At time $k$, let $\widehat x_k$ be the estimated state and
$U=(\bar u_0,\ldots,\bar u_{H-1})$ the nominal sequence of
\emph{unconstrained} command variables, with $\bar u_h\in\mathbb R^d$,
where $d$ is the control dimension.
The prediction model $\widehat F_k$ uses only information available at
time $k$ and need not equal the trial dynamics $F_\omega$.
For $b=1,\ldots,B$, independently sample
\begin{equation}
\label{eq:mppi-samples}
\epsilon_h^{(b)}\sim\mathcal N(0,\Sigma_\theta),\qquad
v_h^{(b)}=\bar u_h+\epsilon_h^{(b)},\qquad h=0,\ldots,H-1,
\end{equation}
where $\Sigma_\theta\succ0$. Let
$V^{(b)}=(v_0^{(b)},\ldots,v_{H-1}^{(b)})$. Evaluate bounded commands and predicted
states using
\begin{equation}
\label{eq:mppi-rollout}
a_h^{(b)}=\operatorname{sat}_{\mathcal U}(v_h^{(b)}),\qquad
\widehat x_0^{(b)}=\widehat x_k,\qquad
\widehat x_{h+1}^{(b)}=\widehat F_k(\widehat x_h^{(b)},a_h^{(b)}).
\end{equation}
The planning score before importance correction is
\begin{equation}
\label{eq:mppi-cost}
\mathcal S_{\theta,k}(V^{(b)})=
\phi_{\theta,k}(\widehat x_H^{(b)})+
\sum_{h=0}^{H-1}
q_{\theta,k,h}(\widehat x_h^{(b)},a_h^{(b)},a_{h-1}^{(b)}),
\qquad a_{-1}^{(b)}=u_{k-1}.
\end{equation}
Unlike the outer task metric~\eqref{eq:rollout-cost}, the planner score
may contain tunable weights. The outer functions $\ell_\omega$,
$\Phi_\omega$, penalty $L_{\mathrm{fail}}$, and failure specification
are fixed for all candidates. Otherwise a candidate could appear better
simply by lowering the weights used to evaluate itself.

For clarity, use a zero-mean reference density
$p_\theta(V)=\prod_h\mathcal N(v_h;0,\Sigma_\theta)$ and a proposal
$q_{U,\theta}(V)=\prod_h\mathcal N(v_h;\bar u_h,\Sigma_\theta)$.
For $\beta>0$, let $Z_{\theta,k}$ denote the normalizing constant and define the information-theoretic target density
\begin{equation}
\label{eq:mppi-target}
q^*_{\theta,k}(V)=Z_{\theta,k}^{-1}
\exp[-\mathcal S_{\theta,k}(V)/\beta]p_\theta(V).
\end{equation}
Assume a measurable score, $0<Z_{\theta,k}<\infty$, finite target
second moments, and finite differential entropy. These conditions hold
for the finite nonnegative navigation scores specified below. Projecting this density onto the Gaussian family with
fixed covariance by minimizing
$D_{\mathrm{KL}}(q^*_{\theta,k}\Vert q_{U,\theta})$ gives
$\bar u_h^*=\mathbb E_{q^*_{\theta,k}}[v_h]$.
This is a distribution-projection statement, not a claim that a
finite-sample command sequence globally solves the constrained robot
problem~\cite{williams2018}.

Since $V=U+\epsilon$, direct expansion of the two Gaussian densities gives
\begin{equation}
\label{eq:mppi-ratio}
\log\frac{p_\theta(V)}{q_{U,\theta}(V)}
=-\sum_{h=0}^{H-1}\bar u_h^{\mathsf T}\Sigma_\theta^{-1}\epsilon_h
-\frac12\sum_{h=0}^{H-1}\bar u_h^{\mathsf T}\Sigma_\theta^{-1}\bar u_h.
\end{equation}
The last term is common to all samples and cancels in normalized weights.
The resulting \texttt{MPPI} update is
\begin{align}
\widetilde{\mathcal S}_b
&=\mathcal S_{\theta,k}(V^{(b)})+
\beta\sum_{h=0}^{H-1}
\bar u_h^{\mathsf T}\Sigma_\theta^{-1}\epsilon_h^{(b)},
\label{eq:mppi-correction}\\
\rho&=\min_{1\leq b\leq B}\widetilde{\mathcal S}_b,\qquad
W_b=\frac{\exp[-(\widetilde{\mathcal S}_b-\rho)/\beta]}
{\sum_{j=1}^B\exp[-(\widetilde{\mathcal S}_j-\rho)/\beta]},
\label{eq:mppi-weights}\\
\bar u_h^+&=\bar u_h+\sum_{b=1}^B W_b\epsilon_h^{(b)},
\qquad h=0,\ldots,H-1.
\label{eq:mppi-update}
\end{align}
This self-normalized importance-sampling estimator is generally biased
at finite $B$, although consistent under the usual integrability
conditions. Subtracting $\rho$ stabilizes its numerical evaluation.
Omitting the density-ratio correction would change the target of the
specified update. The same $\Sigma_\theta$ is used in the reference and
proposal; different covariances would require their full density ratio.

All densities and updates above refer to the raw variables $v_h$ and
$\epsilon_h$, while saturation is inside the rollout model. Replacing
$\epsilon_h$ in~\eqref{eq:mppi-correction}--\eqref{eq:mppi-update}
by clipped perturbations would not implement this Gaussian derivation.
After a fixed number $R$ of sampling/update passes, apply
\begin{equation}
\label{eq:mppi-apply}
u_k=\operatorname{sat}_{\mathcal U}(\bar u_0^+),\qquad
U_{k+1}^{\mathrm{init}}=
(\bar u_1^+,\ldots,\bar u_{H-1}^+,0).
\end{equation}
Initialize $U_0^{\mathrm{init}}=0$ and redraw perturbations in every pass.
This recursion, the state estimator, and the warm start specify
$\pi_\theta^{\mathrm{\texttt{MPPI}}}$ in~\eqref{eq:robot}.
The random perturbations are computational exploration; they need not
equal physical actuation noise. A deterministic prediction model is used
here, while model error and disturbances remain in the outer scenarios.

\subsubsection{Instantiation for mobile-robot motion planning}

A concrete instance is a planar robot with pose
$z=(p_x,p_y,\psi)$, position $p=(p_x,p_y)$, and commanded linear and
angular velocities $u=(v,r)$. A kinematic prediction model is
\begin{equation}
\label{eq:unicycle}
\widehat F(z,u)=
\begin{bmatrix}
p_x+\Delta t\,v\cos\psi\\
p_y+\Delta t\,v\sin\psi\\
\psi+\Delta t\,r
\end{bmatrix},\qquad
\mathcal U=[0,v_{\max}]\times[-r_{\max},r_{\max}].
\end{equation}
Here, $\Delta t>0$ is the prediction time step, while
$v_{\max}>0$ and $r_{\max}>0$ are the prescribed linear- and
angular-velocity limits.
Section~\ref{sec:simulation} instantiates this model with the command
limits and conservative footprint of a \texttt{TurtleBot~4}. The mathematical
formulation itself remains platform-independent. When inertia, slip, or
actuator dynamics matter, $\widehat F$ is replaced by an appropriate
identified model without changing either optimization recursion.
For costs involving the previous input, use the augmented state
$x_k=(z_k,u_{k-1})$, with $u_{-1}=0$, so that~\eqref{eq:robot}
and~\eqref{eq:rollout-cost} retain their stated form. The augmented
transition is $\widehat F_x((z,u^-),u)=(\widehat F(z,u),u)$. Here, $u^-$ denotes the previous executed command and $a^-$ the
previous bounded planning command.

Let $p_g$ be a goal, $\Gamma$ a fixed reference route, and $L>0$ a
fixed distance scale. For disk obstacles with centers $c_j$ and radii
$R_j$, and a disk robot of radius $R_b$, define
\begin{equation}
\label{eq:features}
e_g(p)={\|p-p_g\|}/{L},\qquad
e_\Gamma(p)={\operatorname{dist}(p,\Gamma)}/{L},\qquad
d(p)=\min_j\{\|p-c_j\|-R_j-R_b\}.
\end{equation}
Set $\mathcal X_{\mathrm{safe}}=\{x:d(p)>0\}$ and let
$b(p)=[\max\{0,1-d(p)/d_s\}]^2$, where $d_s>0$ is a fixed clearance
margin. With $D_u=\operatorname{diag}(v_{\max},r_{\max})$, choose
\begin{align}
q_\theta(x,a,a^-)
&=e_g(p)^2+q_p e_\Gamma(p)^2+q_o b(p)
 +q_\Delta\|D_u^{-1}(a-a^-)\|^2
 +L_{\mathrm{col}}\mathbf 1_{\{d(p)\leq0\}},
\label{eq:nav-stage}\\
\phi_\theta(x)
&=\kappa_g e_g(p)^2+
 L_{\mathrm{col}}\mathbf 1_{\{d(p)\leq0\}}.
\label{eq:nav-terminal}
\end{align}
Here, $L_{\mathrm{col}}>0$ and $\kappa_g>0$ are fixed. The fixed unit
coefficient on $e_g^2$ anchors the scale of the running score.
The six tuned quantities and command covariance are
\begin{equation}
\label{eq:theta-mppi}
\theta=(q_p,q_o,q_\Delta,\beta,s_v,s_r)\in\Theta\subset(0,\infty)^6,
\qquad \Sigma_\theta=\operatorname{diag}(s_v^2,s_r^2).
\end{equation}
Here, $s_v$ denotes the translational-speed sampling scale.
For the two unicycle models, $s_r$ denotes the yaw-rate
sampling scale. For the \texttt{Ackermann} model, the second
control component is the steering angle; consequently,
$s_r$ denotes its sampling standard deviation in radians,
rather than a yaw-rate sampling scale. The quantities $H,B,R,\Delta t$, actuator limits, footprint, clearance
margin, and collision definition are fixed design specifications.
In particular, \texttt{MA-ES} does not change physical safety margins to improve
its score. Its matrix $M_g\in\mathbb R^{6\times6}$ adapts the search
over planner parameters; it is distinct from the \texttt{MPPI} covariance
$\Sigma_\theta\in\mathbb R^{2\times2}$.

For this navigation instance, one explicit fixed outer metric is
\begin{align}
\ell_\omega(x,u)
&=a_g e_g(p)^2+a_u\|D_u^{-1}u\|^2
 +a_\Delta\|D_u^{-1}(u-u^-)\|^2,
\label{eq:nav-outer-stage}\\
\Phi_\omega(x)
&=a_T e_g(p)^2+
 a_{\mathrm{miss}}\mathbf1_{\{\|p-p_g\|>r_g\}},
\label{eq:nav-outer-terminal}
\end{align}
where all $a$ coefficients are nonnegative and fixed before tuning,
and goal tolerance $r_g$ and $L_{\mathrm{fail}}$ are shared by every
candidate. Obstacle geometry, initial pose, and motion/sensing errors
vary with $\omega$. Estimated geometry is used for online planning;
the scenario geometry is used to evaluate collisions.
This completes the mapping from \texttt{MA-ES} candidates to robot commands
and to the scalar cost and failure indicator in~\eqref{eq:problem}.

Obstacle penalties in~\eqref{eq:nav-stage}--\eqref{eq:nav-terminal}
are soft preferences and do not guarantee collision avoidance.
In the reported implementation, obstacle costs inside the
\texttt{MPPI} prediction model are evaluated at the sampled
predicted states. The executed closed-loop trajectory is
evaluated using swept-segment collision checks: each straight
segment connecting consecutive simulated positions is tested
against the obstacle geometry after footprint inflation.
This detects intersample collisions along the piecewise-linear
trajectory represented by the discrete simulator, but does not
establish collision avoidance for continuous-time physical motion.

\subsection{(S2): Initialize the \texttt{MA-ES} search}

	Once the controller parameters are fixed, the optimizer needs a search
	representation that respects their admissible ranges. The goal is to allow
	\texttt{MA-ES} to explore freely while ensuring that every evaluated
	controller remains admissible. We therefore search in a latent space and map
	each sampled point to bounded physical controller parameters.

\texttt{MA-ES} searches over an unconstrained vector $y\in\mathbb R^n$. A mapping
$\mathcal T:\mathbb R^n\rightarrow\Theta$ converts it to legal controller
parameters, $\theta=\mathcal T(y)$. For the positive \texttt{MPPI} parameters,
fix bounds $0<\theta_j^-<\theta_j^+<\infty$ and use
\begin{equation}
\label{eq:parameter-map}
\theta_j=\theta_j^-\exp\!\left[
\log\!\left({\theta_j^+}/{\theta_j^-}\right)
\frac{1}{1+e^{-y_j}}\right],\qquad j=1,\ldots,n.
\end{equation}
This parameterizes the open bounded search domain
$\Theta=\prod_j(\theta_j^-,\theta_j^+)$ and ensures strictly positive
temperature and sampling scales. It preserves all latent \texttt{MA-ES} updates;
the chosen nonlinear parameterization does affect the search geometry.

	Equation~\eqref{eq:parameter-map} defines an open mathematical
	parameterization. In finite-precision implementations, its numerical
	evaluation should use a stable sigmoid and retain the result in the
	representable strict interior of
	$(\theta_j^-,\theta_j^+)$; nonfinite latent vectors are inadmissible.
	This numerical safeguard does not change the mathematical parameter map.

Choose $m_0\in\mathbb R^n$, $\sigma_0>0$, $M_0=I_n$, and $s_0=0$. The variables are respectively the mean, global scale, transformation
matrix, and evolution path, and $I_n$ denotes the $n\times n$ identity matrix. A standard population choice is
\begin{equation}
\label{eq:pop}
\lambda=\lfloor4+3\ln n\rfloor,
\qquad \mu=\lfloor\lambda/2\rfloor.
\end{equation}
For the best $\mu$ ranks, define positive weights and an effective
selection size:
\begin{equation}
\label{eq:weights}
\widetilde w_j=\ln\!\left((\lambda+1)/{2}\right)-\ln j,
\quad
w_j=\frac{\widetilde w_j}{\sum_{r=1}^{\mu}\widetilde w_r},
\quad
\mu_{\mathrm{eff}}=\left(\sum_{j=1}^{\mu}w_j^2\right)^{-1},
\quad j=1,\ldots,\mu.
\end{equation}
For the \texttt{MA-ES} updates used below, define
\begin{equation}
\label{eq:norm}
\chi_n
:=\mathbb E_{Z\sim\mathcal N(0,I_n)}\|Z\|_2
=\sqrt{2}\,\frac{\Gamma((n+1)/2)}{\Gamma(n/2)}.
\end{equation}
The remaining \texttt{MA-ES} parameters are
\begin{equation}
\label{eq:rates}
\begin{aligned}
c_s&=\frac{\mu_{\mathrm{eff}}+2}{n+\mu_{\mathrm{eff}}+5}\in (0,2), \ \ c_1 =\frac{2}{(n+1.3)^2+\mu_{\mathrm{eff}}},\\
c_\mu&=\min\!\left\{1-c_1,
\frac{2(\mu_{\mathrm{eff}}-2+1/\mu_{\mathrm{eff}})}
{(n+2)^2+\mu_{\mathrm{eff}}}\right\},\\
d_\sigma&=1+c_s+2\max\!\left\{0,
\sqrt{(\mu_{\mathrm{eff}}-1)/(n+1)}-1\right\}.
\end{aligned}
\end{equation}
Since $n\ge1$ and $\mu_{\mathrm{eff}}\ge1$, the square root in
$d_\sigma$ is real. The particular choice of $c_s$ satisfies $0<c_s<1$,
so $\sqrt{\mu_{\mathrm{eff}}c_s(2-c_s)}$ is real. Moreover,
$c_1>0$, $c_\mu\ge0$, $d_\sigma>0$, and $\chi_n>0$.

\subsection{(S3): Sample candidates and perform full evaluations}

	Candidate controllers must be compared under uncertainty without giving some
	candidates systematically easier trials than others. The goal of this step is
	to obtain fair within-generation comparisons while exposing later generations
	to fresh conditions. We therefore use shared trial scenarios within each
	generation and refresh them across generations.

For each $i\in\{1,\ldots,\lambda\}$ in generation $g$, draw
\begin{equation}
\label{eq:sampling}
z_i^{(g)}\sim\mathcal N(0,I_n),\qquad
y_i^{(g)}=m_g+\sigma_gM_gz_i^{(g)},\qquad
\theta_i^{(g)}=\mathcal T\bigl(y_i^{(g)}\bigr).
\end{equation}
Thus the latent sampling distribution has mean $m_g$ and covariance
$\sigma_g^2M_gM_g^{\mathsf T}$. This covariance is implied by the
transformation; \texttt{MA-ES} does not explicitly maintain a separate covariance
matrix.

At a sampling look with $K=K_\ell$, use a common scenario prefix
$\omega_1^{(g)},\ldots,\omega_K^{(g)}$ for the generation. Run the complete
closed-loop system~\eqref{eq:robot} for every candidate and every scenario,
and compute
\begin{equation}
\label{eq:estimates}
\widehat J_g(\theta_i^{(g)})
=\frac1K\sum_{r=1}^K C(\theta_i^{(g)},\omega_r^{(g)}),\ \
\widehat p_g(\theta_i^{(g)})
=\frac1K\sum_{r=1}^K
  I_{\mathrm{fail}}(\theta_i^{(g)},\omega_r^{(g)}).
\end{equation}
Common scenarios make within-generation comparisons less dependent on
which candidate happened to encounter an easier environment. Refreshing
scenarios in later generations reduces dependence on one fixed training set.
For \texttt{MPPI}, share the standard-normal draws $e_{k,a,h}^{(b)}$ within a
generation and construct candidate-specific perturbations
$\epsilon_{k,a,h}^{(b)}=\Sigma_\theta^{1/2}e_{k,a,h}^{(b)}$.
Here, $a=1,\ldots,R$ indexes refinement passes; the scenario also indexes
the trial. Sharing the realized perturbations irrespective of $\Sigma_\theta$
would implement the wrong proposal for some candidates.
Use independent scenario seeds across trials, reset the state estimator
and warm start at the beginning of each trial, and refresh both environment
and planner seeds across generations. Shared random numbers facilitate
paired comparisons; they do not universally guarantee variance reduction.

The prefix is extended, not redrawn, between looks of the same generation.
Fresh scenario streams are independent across generations, conditional on
all previously selected search states. Candidates cannot be modified in
response to partial evaluations within a race.
\subsection{(S4): Risk comparisons and update-aware statistical racing}
\label{sec:racing}

	Completely resolving every population ranking can require many expensive
	robot trials even when the remaining ambiguity would barely affect the
	optimizer. The goal of this step is to stop evaluation when ranking
	uncertainty is sufficiently small for the \texttt{MA-ES} update. We achieve
	this by converting statistically supported comparisons into bounds on the
	possible optimizer update.

\subsubsection{Empirical reference ordering and assumptions}

For the remainder of this subsection, fix the current generation and
look and suppress the generation superscript when no ambiguity arises:
$\theta_i:=\theta_i^{(g)}$, $z_i:=z_i^{(g)}$, and
$\omega_r:=\omega_r^{(g)}$. We also write
$\widehat J_i:=\widehat J_g(\theta_i^{(g)})$ and
$\widehat p_i:=\widehat p_g(\theta_i^{(g)})$. The empirical-ranking ablation orders the population by
\begin{equation}
\label{eq:rank}
\left(\max\{0,\widehat p_i-\delta\},\widehat J_i,i\right),
\end{equation}
where $i$ is a fixed tie-breaking index. This feasibility-first construction
is consistent with the established separation of objective and violation
in evolutionary constraint handling~\cite{deb2000}. The target ordering for
the analysis replaces the estimates by $p_i=p_{\mathrm{fail}}(\theta_i)$
and $J_i=J(\theta_i)$. It is an ordering of the \emph{realized population},
not an assumption that a globally optimal controller lies in it. The
feasibility-first lexicographic rule is a constraint-handling search
heuristic: it prioritizes lower violation when no feasible candidate is
available and lower cost on the feasible plateau, but by itself it is not a
convergence proof for the chance-constrained problem~\eqref{eq:problem}.

We impose three additional conditions for statistical racing. Conditional
on all previous generations, the candidates are fixed and scenarios in
the current stream are independent draws from the same $\mathcal D$.
The generation cap and finite sample-size grid are specified before the
search. Finally, we assume that fixed, known bounds satisfy
\begin{equation}
	\label{eq:bounded-cost}
	C_{\min}\le C(\theta,\omega)\le C_{\max}
	\quad\text{a.s. for every }\theta\in\Theta,
	\qquad W=C_{\max}-C_{\min}>0,
\end{equation}
where ``a.s.'' means ``almost surely'' with respect to the
randomness in $\omega$. The last condition is not implied by a
finite empirical data set or by Gaussian \texttt{MPPI} sampling. It must follow from the declared task and scenario
support, for example bounded initial conditions and physical disturbances,
bounded inputs, a finite trial horizon, and bounded cost features on the
resulting reachable set. The Gaussian \emph{internal} command samples can
remain unbounded because executed commands are saturated. With unbounded
external cost or disturbance support, the following cost bounds must be
replaced by a justified alternative; clipping measured outcomes after the
fact would change the objective and is not part of this method.

\subsubsection{Simultaneous risk and paired-cost intervals}
For a predeclared search error budget $0<\alpha_s<1$, set
\begin{equation}
\label{eq:error-allocation}
Q=G_{\max}L_{\mathrm{look}}\left[\lambda+\binom{\lambda}{2}\right],
\qquad \varepsilon=\alpha_s/Q.
\end{equation}
At a look with $K$ trials and $f_i$ observed failures, use the two-sided
Clopper--Pearson interval $[p_i^-,p_i^+]$ with noncoverage at most
$\varepsilon$~\cite{clopper1934}:
\begin{equation}
\label{eq:cp-search}
\begin{aligned}
p_i^-&=\begin{cases}0,&f_i=0,\\
\operatorname{B}^{-1}(\varepsilon/2;f_i,K-f_i+1),&f_i>0,\end{cases}\\
p_i^+&=\begin{cases}1,&f_i=K,\\
\operatorname{B}^{-1}(1-\varepsilon/2;f_i+1,K-f_i),&f_i<K.
\end{cases}
\end{aligned}
\end{equation}
Here, $\operatorname{B}^{-1}(q;a,b)$ is a beta-distribution quantile.
Define $v_i^-=(p_i^--\delta)_+$ and $v_i^+=(p_i^+-\delta)_+$,
where $(t)_+=\max(0,t)$.
For each unordered candidate pair use the paired observations
$D_{ij,r}=C(\theta_i,\omega_r)-C(\theta_j,\omega_r)\in[-W,W]$.
Hoeffding's inequality~\cite{hoeffding1963} gives
\begin{equation}
\label{eq:paired-cost}
\widehat d_{ij}=\frac1K\sum_{r=1}^K D_{ij,r},\quad
r_K=W\sqrt{{2\log(2/\varepsilon)}/{K}},\quad
J_i-J_j\in[\widehat d_{ij}-r_K,\widehat d_{ij}+r_K].
\end{equation}
Reverse-direction intervals are obtained by negation, not treated as extra
tests. Dependence between candidates sharing a scenario is permitted;
independence is required across trials. Pairing can change estimator
variance but is not asserted to improve this distribution-free radius.
The Hoeffding radius is intentionally assumption-light and can be very
conservative when the known global cost range $W$ is much larger than the
observed paired variability. Proposition~\ref{prop:update} does not depend
on Hoeffding specifically: tighter finite-look or sequential intervals may
be substituted if their assumptions are justified and their coverage is
controlled simultaneously over the comparisons actually used.

Predeclaration of the look grid alone does not ensure that useful comparisons can be certified. Before an experiment, the largest planned look should therefore be checked analytically for statistical resolvability: the risk intervals must be capable of separating relevant
 failure probabilities or certifying the safe plateau, and the cost
	interval radius should be small enough to resolve practically meaningful
	cost differences. The quick-budget study below deliberately remains a
	proof-of-execution study and does not satisfy these stronger
	resolvability requirements.

\begin{proposition}[Simultaneous coverage]
\label{prop:coverage}
Under the stated assumptions, all risk and paired-cost intervals used at
all visited looks and generations simultaneously cover their respective
population quantities with probability at least $1-\alpha_s$.
\end{proposition}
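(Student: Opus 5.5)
The argument is a union bound over a predeclared, finite family of ``slots''. The main care is needed because the visited looks and generations are random, and the candidates in later generations depend on earlier data.

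First, index every interval that could possibly be formed. These are the triples $(g,\ell,c)$ with $g\le G_{\max}$, $\ell\le L_{\mathrm{look}}$, and $c$ ranging over the $\lambda$ risk slots and the $\binom{\lambda}{2}$ unordered pairs. There are exactly $Q$ such slots by \eqref{eq:error-allocation}. For each slot, define the miscoverage event $E_{g,\ell,c}$ as follows: either the Clopper--Pearson interval \eqref{eq:cp-search} built from the first $K_\ell$ trials of generation $g$ misses $p_{\mathrm{fail}}(\theta_i^{(g)})$, or the Hoeffding interval \eqref{eq:paired-cost} misses $J(\theta_i^{(g)})-J(\theta_j^{(g)})$.

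To handle generations or looks that are never visited, formally extend the scenario streams. Conceptually, every generation has a full stream of $K_{L_{\mathrm{look}}}$ scenarios available, and candidates are defined by the recursion even if the race stops early. Then each $E_{g,\ell,c}$ is well defined, and the event ``some interval actually used fails to cover'' is contained in $\bigcup_{g,\ell,c}E_{g,\ell,c}$. It therefore suffices to show $\mathbb P(E_{g,\ell,c})\le\varepsilon$ for every slot. Summing gives $Q\varepsilon=\alpha_s$, and taking complements yields the claim.

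The per-slot bound is proved conditionally. Let $\mathcal F_{g-1}$ denote the history up to the sampling of generation $g$, including $z_i^{(g)}$. By the racing assumptions, the candidates $\theta_i^{(g)}$ are $\mathcal F_{g-1}$-measurable, and $\omega_1^{(g)},\omega_2^{(g)},\dots$ are i.i.d.\ $\mathcal D$ conditional on $\mathcal F_{g-1}$. Moreover, a fixed look $K_\ell$ uses a deterministic prefix length, not a data-dependent one.

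Hence, conditionally, the failure count $f_i$ is $\mathrm{Binomial}(K_\ell,p_i)$, so the exact Clopper--Pearson guarantee gives conditional noncoverage at most $\varepsilon$. Similarly, the paired differences $D_{ij,r}$ are i.i.d., lie in $[-W,W]$ by \eqref{eq:bounded-cost}, and have mean $J_i-J_j$. The two-sided Hoeffding bound $2\exp(-2K t^2/(2W)^2)$ evaluated at $t=r_K$ equals $\varepsilon$.

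Integrating over $\mathcal F_{g-1}$ gives the unconditional bound $\mathbb P(E_{g,\ell,c})\le\varepsilon$. Dependence across candidates sharing a scenario is irrelevant here, because each slot involves a single marginal or a single pairwise difference. Dependence across looks within a generation is also irrelevant, because the union bound needs no independence.

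The main obstacle is the optional-stopping issue. The race stops at a data-dependent look, and later candidates depend on earlier outcomes, so the ``intervals used'' form a random subfamily. I would resolve it exactly as above. First, bound the miscoverage of every slot in the fixed superset of $Q$ potential intervals. Second, use measurability of the candidates with respect to $\mathcal F_{g-1}$, so that each per-slot bound is a fixed-sample statement conditionally on that history.

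Two further points need to be checked explicitly. One is that the fresh stream of generation $g$ is independent of $\mathcal F_{g-1}$, which is the stated assumption. The other is that reverse-direction intervals are negations of the same event, so they consume no additional error budget.
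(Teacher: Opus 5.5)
Your proposal is correct and follows essentially the same route as the paper: conditional on the incoming history and sampled population, each Clopper--Pearson and Hoeffding interval has noncoverage at most $\varepsilon$, and a union bound over the $Q$ predeclared generation--look--candidate/pair slots, combined with averaging over the history, gives $Q\varepsilon=\alpha_s$. Formally extending the scenario streams to a fixed superset of slots is a more explicit version of the paper's remark that stopping at a visited look selects a subset of these simultaneous statements, and your check that $r_K$ makes the two-sided Hoeffding bound equal to $\varepsilon$ is correct.
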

\begin{proof}
For a fixed generation, condition on its incoming history and sampled
population. Each risk count is binomial and each paired-cost sequence is
i.i.d. and bounded. Each interval therefore has conditional noncoverage at
most $\varepsilon$. A union bound over the predeclared looks, candidates,
and unordered pairs, followed by averaging over the incoming history and
summing over at most $G_{\max}$ generations, gives $Q\varepsilon=\alpha_s$.
Stopping a race at a visited look selects a subset of these simultaneous
statements; it does not introduce an unadjusted optional-stopping claim.
\end{proof}

\subsubsection{From safe comparisons to admissible ranks}
Add a directed edge $i\prec j$ when either of the following holds:
\begin{equation}
\label{eq:dominance}
\begin{aligned}
&v_i^+<v_j^-, \quad\text{or}\quad p_i^+\le\delta,\quad p_j^+\le\delta,\quad
  \widehat d_{ij}+r_K<0.
\end{aligned}
\end{equation}
The first rule separates true constraint violations. The second compares
cost only when both candidates are certified to lie on the zero-violation
plateau. Overlapping violation intervals, or an empirical tie between
infeasible candidates, do not justify a cost-only certified comparison.
Retain edges from previous looks and take their transitive closure.
On the coverage event, every edge agrees with the true lexicographic
ordering, so the graph is acyclic and that true ordering is a linear
extension of it.

Choose a deterministic extension $\widehat\pi$ by repeatedly selecting,
among currently predecessor-free candidates, the smallest empirical key
in~\eqref{eq:rank}. This respects certified edges even when the raw
empirical ordering does not. Let $P_i$ and $S_i^{\mathrm{succ}}$ be the
strict predecessor and successor sets in the transitive closure. Every
admissible ordering places candidate $i$ at a rank in
\begin{equation}
\label{eq:rank-range}
\underline r_i=1+|P_i|,\qquad
\overline r_i=\lambda-|S_i^{\mathrm{succ}}|.
\end{equation}
The bounds in~\eqref{eq:rank-range} are outer bounds obtained from
predecessor and successor counts. Not every integer rank inside an
individual interval need occur in a common linear extension, so treating
the candidates independently through these intervals is conservative but
safe for the update-discrepancy bounds below. If numerical computation
produces a cycle, the look is not certified; record the inconsistency and
use the empirical fallback rather than claiming a confidence-supported
order.

\subsubsection{From rank ambiguity to \texttt{MA-ES} update bounds}
Extend the positive, nonincreasing recombination weights from
\eqref{eq:weights} by $w_j=0$ for $j>\mu$. Let
$\widehat a_i=w_{\operatorname{rank}_{\widehat\pi}(i)}$ and define
\begin{equation}
\label{eq:weight-radius}
a_i^-=w_{\overline r_i},\ \ a_i^+=w_{\underline r_i},\ \
b_i=\max\{\widehat a_i-a_i^-,\ a_i^+-\widehat a_i\},\ \
E_z=\sum_{i=1}^{\lambda}b_i\|z_i\|_2,\ \
E_S=\sum_{i=1}^{\lambda}b_i\|z_i\|_2^2.
\end{equation}
The current extension yields
\begin{equation}
\label{eq:recombination}
\widehat{\bar z}_g=\sum_i\widehat a_i z_i^{(g)},\qquad
\widehat S_g=\sum_i\widehat a_i z_i^{(g)}z_i^{(g)\mathsf T}.
\end{equation}
Hereafter the unhatted $\bar z_g,S_g$ in the standard recursion denote
these chosen quantities. A star denotes a counterfactual update using the
\emph{true ordering of this same population and the same incoming state}.
Set
\begin{equation}
\label{eq:path-radius}
a_s=\sqrt{\mu_{\mathrm{eff}}c_s(2-c_s)},\qquad h=a_s E_z.
\end{equation}
We call the recombination direction \emph{rank-certified} when the
	sufficient certificate $E_z=0$ holds. On the simultaneous coverage event,
	this guarantees that the chosen-order and true-order recombination
	directions coincide.
\begin{proposition}[Population-conditional update discrepancy]
\label{prop:update}
On the simultaneous coverage event, for every visited acyclic look,
\begin{align}
\|\widehat{\bar z}_g-\bar z_g^*\|_2&\le E_z, \ \ 
\|\widehat S_g-S_g^*\|_2\le E_S,\label{eq:stat-bounds}\\
\|\widehat m_{g+1}-m_{g+1}^*\|_2&\le\sigma_g\|M_g\|_2 E_z,
 \ \ \|\widehat s_{g+1}-s_{g+1}^*\|_2\le h,\label{eq:mean-path-bounds}\\
\|\widehat M_{g+1}-M_{g+1}^*\|_2
&\le\|M_g\|_2\left[\frac{c_1}{2}(2\|\widehat s_{g+1}\|_2+h)h
+\frac{c_\mu}{2}E_S\right],\label{eq:matrix-bound}\\
\left|\log\frac{\widehat\sigma_{g+1}}{\sigma_{g+1}^*}\right|
&\le \frac{c_s}{d_\sigma\chi_n}h.\label{eq:scale-bound}
\end{align}
Vector norms are Euclidean and matrix norms are spectral.
\end{proposition}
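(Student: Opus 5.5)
The plan is to exploit the fact that, on the simultaneous coverage event of Proposition~\ref{prop:coverage}, the true risk-constrained ordering of the realized population is a linear extension of the certified partial order. The bound then reduces to weight perturbations, which propagate through the explicit \texttt{MA-ES} recursion by triangle inequalities. Throughout, I take the standard \texttt{MA-ES} recursion~\cite{beyer2017} in the form
$m_{g+1}=m_g+\sigma_gM_g\bar z_g$,
$s_{g+1}=(1-c_s)s_g+a_s\bar z_g$,
$M_{g+1}=M_g\bigl[I_n+\tfrac{c_1}{2}(s_{g+1}s_{g+1}^{\mathsf T}-I_n)+\tfrac{c_\mu}{2}(S_g-I_n)\bigr]$ and
$\sigma_{g+1}=\sigma_g\exp\bigl[\tfrac{c_s}{d_\sigma}(\|s_{g+1}\|_2/\chi_n-1)\bigr]$.
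Both the hatted and the starred quantities use the same incoming state $(m_g,s_g,M_g,\sigma_g)$ and the same draws $z_i$.

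\emph{Step 1: weight bounds.} On the coverage event, every edge in~\eqref{eq:dominance} agrees with the true lexicographic order. The retained edges and their transitive closure therefore also agree with it. Hence the true rank $r_i^*$ of candidate $i$ is at least $1+|P_i|$, since all predecessors precede it, and at most $\lambda-|S_i^{\mathrm{succ}}|$; that is, $r_i^*\in[\underline r_i,\overline r_i]$. Because the extended weights are nonincreasing in rank (zero beyond $\mu$), the true weight satisfies $a_i^*=w_{r_i^*}\in[w_{\overline r_i},w_{\underline r_i}]=[a_i^-,a_i^+]$. The chosen weight $\widehat a_i$ lies in the same interval, because $\widehat\pi$ is also a linear extension. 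It follows that $|\widehat a_i-a_i^*|\le b_i$. This is the only step that uses the statistics; everything afterwards is deterministic algebra.

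\emph{Step 2: recombination quantities.} We have $\widehat{\bar z}_g-\bar z_g^*=\sum_i(\widehat a_i-a_i^*)z_i$. The triangle inequality gives $\|\widehat{\bar z}_g-\bar z_g^*\|_2\le E_z$. Similarly, $\|z_iz_i^{\mathsf T}\|_2=\|z_i\|_2^2$ gives $\|\widehat S_g-S_g^*\|_2\le E_S$. The mean bound follows from $\widehat m_{g+1}-m_{g+1}^*=\sigma_gM_g(\widehat{\bar z}_g-\bar z_g^*)$ and submultiplicativity. The path bound follows because the $(1-c_s)s_g$ terms cancel, leaving $a_s(\widehat{\bar z}_g-\bar z_g^*)$, whose norm is at most $a_sE_z=h$.

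\emph{Step 3: matrix and scale.} The identity terms cancel, so $\widehat M_{g+1}-M_{g+1}^*=M_g\bigl[\tfrac{c_1}{2}(\widehat s\widehat s^{\mathsf T}-s^*s^{*\mathsf T})+\tfrac{c_\mu}{2}(\widehat S_g-S_g^*)\bigr]$. I write $\widehat s\widehat s^{\mathsf T}-s^*s^{*\mathsf T}=\widehat s(\widehat s-s^*)^{\mathsf T}+(\widehat s-s^*)s^{*\mathsf T}$. Its spectral norm is then at most $(\|\widehat s\|_2+\|s^*\|_2)h$, and using $\|s^*\|_2\le\|\widehat s\|_2+h$ this is at most $(2\|\widehat s\|_2+h)h$, which yields~\eqref{eq:matrix-bound}. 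For the scale, $\log(\widehat\sigma_{g+1}/\sigma_{g+1}^*)=\tfrac{c_s}{d_\sigma\chi_n}(\|\widehat s\|_2-\|s^*\|_2)$. The reverse triangle inequality then gives~\eqref{eq:scale-bound}.

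The main obstacle is Step~1. It requires care that edges retained from earlier looks, and the transitive closure, remain consistent with the true order. This holds because coverage in Proposition~\ref{prop:coverage} is simultaneous over all visited looks, and that same simultaneity also guarantees acyclicity. Monotonicity of the zero-extended weights must be checked explicitly, since $w_\mu>0=w_{\mu+1}$ still respects the nonincreasing order.
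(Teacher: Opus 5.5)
Your proposal is correct and follows essentially the same route as the paper: the true order is an admissible linear extension, and the monotone zero-extended weights then give $|\widehat a_i-a_i^*|\le b_i$, after which the triangle inequality and submultiplicativity carry the bound through the mean, path, matrix, and log-scale recursions. Your factorization $\widehat s(\widehat s-s^*)^{\mathsf T}+(\widehat s-s^*)s^{*\mathsf T}$ yields the same $(2\|\widehat s_{g+1}\|_2+h)h$ term as the paper's expansion in $e=\widehat s_{g+1}-s_{g+1}^*$, and your remark that $\widehat a_i\in[a_i^-,a_i^+]$ is true but unnecessary, since the definition of $b_i$ already covers both one-sided deviations.
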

\begin{proof}
The true ordering is an admissible extension. Monotonicity of the extended
weights gives $a_i^*\in[a_i^-,a_i^+]$, hence
$|\widehat a_i-a_i^*|\le b_i$. The triangle inequality and
$\|z_i z_i^{\mathsf T}\|_2=\|z_i\|_2^2$ prove
\eqref{eq:stat-bounds}. Substitution into the mean and path recursions
proves~\eqref{eq:mean-path-bounds}. Writing
$e=\widehat s_{g+1}-s_{g+1}^*$ gives
\[
\|\widehat s_{g+1}\widehat s_{g+1}^{\mathsf T}
-s_{g+1}^*s_{g+1}^{*\mathsf T}\|_2
\le 2\|\widehat s_{g+1}\|_2\|e\|_2+\|e\|_2^2.
\]
The matrix recursion and submultiplicativity imply
\eqref{eq:matrix-bound}. Taking the difference of the two log-scale
updates and applying the reverse triangle inequality gives
\eqref{eq:scale-bound}. 
\end{proof}
Before the batch study, unit tests checked the latent-to-physical parameter transform, finite \texttt{MA-ES} updates, the exact zero-failure Clopper--Pearson identity, swept-disk collision geometry, \texttt{MPPI} reproducibility and actuator bounds, deterministic complete-trial reproducibility, and the analytic outer-cost bound. These are software-consistency checks; the statistical guarantees of the method follow from the analytical results above rather than from the unit tests.

\subsubsection{Update-aware stopping and budget exhaustion}
At each look, evaluate the right-hand sides of
\eqref{eq:mean-path-bounds}--\eqref{eq:scale-bound}. Stop when they are no
larger than the predeclared tolerances
$(\eta_m,\eta_s,\eta_M,\eta_{\log\sigma})$, respectively, or when $K_{L_{\mathrm{look}}}$
is reached. The tolerances concern latent search-state changes, not robot
tracking errors. They are therefore parameterization- and scale-dependent,
not invariant physical error tolerances. They must be fixed before the run
and reported; zero tolerances recover exact-update certification whenever
the bounds vanish.
An unresolved ordering among candidates certainly outside the elite set
has zero weight and need not be resolved. An unresolved ordering among
elite candidates generally does matter because the weights are nonuniform.
Small update sensitivity can also permit termination without an exact
elite ranking.

At the cap, apply the chosen extension but explicitly label the generation
\emph{tolerance-unresolved} if any bound exceeds its tolerance. The error
bounds remain statements on the coverage event; budget exhaustion does not
turn them into small-error guarantees. Exact risk ties, candidates near
$\delta$, broad cost bounds, and conservative multiple-testing correction
can prevent useful early stopping. No sample-efficiency improvement is
asserted without experiments. The guarantee compares one update at the
realized incoming state, not the entire search trajectory with a noiseless
oracle trajectory, and it is not a convergence or stability theorem.

\subsection{(S5): Update mean, path, matrix, scale, and recombination point}\label{sec:recombination-linesearch}

	After the race stops, the chosen population ordering must be converted into
	the next \texttt{MA-ES} search state. The goal is to preserve the standard
	adaptation mechanism while allowing the recombination point to be improved
	when its direction is rank-certified. We therefore apply the standard state
	updates and conditionally refine only the recombination point by a
	nonmonotone derivative-free line search.

\texttt{MA-ES} performs the following updates, with all right-hand-side generation
$g$ quantities taken before they are overwritten:
\begin{align}
	m_{g+1}
	&=m_g+\sigma_gM_g\bar z_g,
	\label{eq:mean}\\
	s_{g+1}
	&=(1-c_s)s_g+
	\sqrt{\mu_{\mathrm{eff}}c_s(2-c_s)}\,\bar z_g,
	\label{eq:path}\\
	M_{g+1}
	&=M_g\left[
	I_n+\frac{c_1}{2}
	\bigl(s_{g+1}s_{g+1}^{\mathsf T}-I_n\bigr)
	+\frac{c_\mu}{2}(S_g-I_n)
	\right],
	\label{eq:matrix}\\
	\sigma_{g+1}
	&=\sigma_g\exp\!\left[
	\frac{c_s}{d_\sigma}
	\left(\frac{\|s_{g+1}\|}{\chi_n}-1\right)
	\right].
	\label{eq:scale}
\end{align}
The coefficients $\mu_{\mathrm{eff}}$, $\chi_n$, $c_s$, $c_1$,
$c_\mu$, and $d_\sigma$ are defined in
\eqref{eq:weights}--\eqref{eq:rates}. The mean update determines the next
sampling center, while the path, transformation matrix, and scale adapt the
subsequent search distribution. These are the standard \texttt{MA-ES} state
updates used here~\cite{beyer2017}, before the conditional
recombination-point refinement described below.

	\paragraph{Nonmonotone derivative-free line search for the recombination point.}

	The standard \texttt{MA-ES} mean update in~\eqref{eq:mean} uses the weighted
	recombination direction once, with the unit scalar multiplier. In a conditional refinement, we use a derivative-free line search to determine
	whether extrapolation along this direction produces a better recombination
	point. The construction is adapted from the recombination line-search idea
	of Kimiaei and Neumaier~\cite{kimiaei2024madfo}, but we remove the additional
	randomization used in its nonmonotone reference term. The recombination
	direction itself remains stochastic because it is generated from the random
	mutation and selection process of \texttt{MA-ES}. This modification is
	natural here because the outer problem is already stochastic through
	complete robot trials and the main analysis of this paper is concerned
	with uncertainty in the \texttt{MA-ES} update.
	
	Define the generation-$g$ recombination direction in latent coordinates by
	\begin{equation}
		\label{eq:ls-recombination-direction}
		d_g^{\mathrm{rec}}:=M_g\bar z_g,
		\qquad
		m_{g+1}^{\mathrm{MA}}:=m_g+\sigma_g d_g^{\mathrm{rec}}.
	\end{equation}
	Thus $m_{g+1}^{\mathrm{MA}}$ is exactly the mean produced by the standard
	update~\eqref{eq:mean}. The line search does not change $\bar z_g$, $S_g$,
	or any recombination weight. It only tests scalar multiples of the already
	computed direction. For a scalar $\alpha>0$, define
	\begin{equation}
		\label{eq:ls-trial-point}
		y_g(\alpha):=m_g+\alpha\sigma_g d_g^{\mathrm{rec}},
		\qquad
		\theta_g(\alpha):=\mathcal T\bigl(y_g(\alpha)\bigr).
	\end{equation}
	The standard \texttt{MA-ES} recombination point corresponds to $\alpha=1$.
	No derivative of the complete-trial cost, the safety probability, the
	parameter map $\mathcal T$, or the \texttt{MPPI} closed-loop simulation is
	used.
	
\paragraph{Deterministic nonmonotone reference.}
Let $K_g$ denote the sample size reached at the final racing look of
generation $g$. All population candidates have already been evaluated on the
same prefix of $K_g$ complete scenarios. Let
\begin{equation}
	\label{eq:ls-population-cost-reference}
	\mathcal F_g:=
	\left\{
	\widehat J_g\bigl(\theta_i^{(g)}\bigr):i=1,\ldots,\lambda
	\right\},
\end{equation}
Fix an integer nonmonotone-memory length $m_{\mathrm{nm}}\geq1$ before the
run and maintain the deterministic FIFO collection
\begin{equation}
	\label{eq:ls-memory}
	\mathcal H_g^{(m_{\mathrm{nm}})}
	:=
	\bigcup_{t=\max\{0,g-m_{\mathrm{nm}}+1\}}^{g}\mathcal F_t.
\end{equation}
Thus the memory contains at most $m_{\mathrm{nm}}\lambda$ population-cost
statistics and uses no random replacement. Define
\begin{equation}
	\label{eq:ls-min-med-max}
	J_g^{\min}:=\min\mathcal H_g^{(m_{\mathrm{nm}})},
	\qquad
	J_g^{\mathrm{med}}:=\operatorname{median}\mathcal H_g^{(m_{\mathrm{nm}})},
	\qquad
	J_g^{\max}:=\max\mathcal H_g^{(m_{\mathrm{nm}})}.
\end{equation}
The choice $m_{\mathrm{nm}}=1$ recovers the current-population reference
$\mathcal H_g^{(1)}=\mathcal F_g$. Fix once, before the run, a deterministic
parameter $\eta_{\mathrm{nm}}\in(1/2,1)$. After evaluating the nominal recombination
point $\theta_g(1)$ on the same stored generation-$g$ scenario prefix, set
\begin{equation}
	\label{eq:deterministic-nonmonotone-reference}
	J_g^{\mathrm{nm}}:=
	\begin{cases}
		\eta_{\mathrm{nm}}J_g^{\mathrm{med}}
		+(1-\eta_{\mathrm{nm}})J_g^{\max},
		& \widehat J_g^{\mathrm{ls}}(1)\ge J_g^{\max},\\[1mm]
		(1-\eta_{\mathrm{nm}})J_g^{\mathrm{med}}
		+\eta_{\mathrm{nm}}J_g^{\min},
		& \widehat J_g^{\mathrm{ls}}(1)<J_g^{\max}.
	\end{cases}
\end{equation}
Here, $\widehat J_g^{\mathrm{ls}}(\alpha)$ denotes the average complete-trial
cost of $\theta_g(\alpha)$ over that stored scenario prefix. Equation
\eqref{eq:deterministic-nonmonotone-reference} retains the median/minimum/
maximum structure of the nonmonotone reference used in
Kimiaei and Neumaier~\cite{kimiaei2024madfo}, but the random convex-combination
parameter and random history replacement are removed. The retained history is
instead updated deterministically by the FIFO rule~\eqref{eq:ls-memory}.
Consequently, once the incoming history, generation data, and recombination
direction are fixed, the nonmonotone reference introduces no additional
randomization.
A larger $\eta_{\mathrm{nm}}$ gives more weight to the median in the first
branch and to the minimum in the second branch. The value of
$\eta_{\mathrm{nm}}$ is a tuning parameter and must be fixed before the
numerical experiment rather than selected after inspecting the results.

Because the present problem is chance constrained, a cost-only line search
must not be allowed to improve cost by silently worsening the empirical
constraint violation. For every line-search point define
\begin{equation}
	\label{eq:ls-risk-violation}
	v_g^{\mathrm{ls}}(\alpha)
	:=\max\left\{0,
	\widehat p_g^{\mathrm{ls}}(\alpha)-\delta
	\right\},
\end{equation}
where $\widehat p_g^{\mathrm{ls}}(\alpha)$ is computed from the same $K_g$
complete scenarios as $\widehat J_g^{\mathrm{ls}}(\alpha)$. These quantities
are search statistics only; they are not substitutes for the independent
final validation in Section~\ref{sec:validation}.

	The line-search statistics are not included in the simultaneous-coverage
	event of Proposition~\ref{prop:coverage}; they are used only to guide the
	optimization step. No finite-sample certification of the true failure
	probability or expected cost of a line-search point is claimed at this
	stage. Final risk certification is provided only by the independent
	validation procedure in Step~(S6).

\paragraph{Geometric search and extrapolation.}
	Choose fixed parameters $0<\gamma_{\mathrm{ls}}<1$, $\gamma_e>1$,
	an initial search multiplier $0<\alpha_0\leq1$, a maximum multiplier
	$\alpha_{\max}>1$, and a maximum number $Q_{\max}$ of line-search points.
	The standard \texttt{MA-ES} recombination point $\alpha=1$ is always
	evaluated and retained as a fallback. Independently, the geometric search
	starts from $\alpha_0$ and uses
	\begin{equation}
		\label{eq:ls-expansion}
		\alpha_{q+1}=\gamma_e\alpha_q,
		\qquad q=0,1,\ldots.
	\end{equation}
	Thus $\alpha_0<1$ provides an under-relaxed trial before the sequence enters
	the extrapolation region $\alpha>1$, whereas $\alpha_0=1$ recovers the
	original pure-extrapolation initialization. A duplicate evaluation at
	$\alpha=1$ is skipped. A non-nominal search trial is admissible for
	continuation if either
	\begin{equation}
		\label{eq:ls-risk-improvement}
		v_g^{\mathrm{ls}}(\alpha)
		< v_g^{\mathrm{ls}}(1),
	\end{equation}
	or
	\begin{equation}
		\label{eq:ls-cost-condition}
		v_g^{\mathrm{ls}}(\alpha)
		= v_g^{\mathrm{ls}}(1)
		\quad\text{and}\quad
		\widehat J_g^{\mathrm{ls}}(\alpha)
		+\gamma_{\mathrm{ls}}(\alpha\sigma_g)^2
		< J_g^{\mathrm{nm}}.
	\end{equation}
	Condition~\eqref{eq:ls-cost-condition} uses the deterministic nonmonotone
	reference term and is adapted from the forcing-function nonmonotone
	condition used for recombination extrapolation
	in~\cite{kimiaei2024madfo}. The forcing term contains no directional
	derivative. The quantity $\gamma_{\mathrm{ls}}(\alpha\sigma_g)^2$ is a prescribed
		derivative-free forcing term used only to control continuation of the
		extrapolation sequence; no convergence claim in this paper relies on a
		step-length interpretation of this term. Condition~\eqref{eq:ls-risk-improvement} preserves the
	risk-first structure already used by the empirical key~\eqref{eq:rank}:
	a strict reduction of empirical chance-constraint violation is preferred,
	whereas cost extrapolation is allowed only on the same empirical violation
	level. If a trial fails both continuation tests, the search is terminated after
	that trial has been evaluated, so the continuation-failing terminal point
	remains eligible for final selection. The upper bound $\alpha_{\max}$ is a
	separate safeguard: if the next geometric multiplier would exceed
	$\alpha_{\max}$, the point $\alpha=\alpha_{\max}$ is evaluated once and the
	search terminates because the cap has been reached. This capped point is not
	classified as a failed continuation point and also remains eligible for final
	selection. Let $\mathcal E_g$ denote the set of all multipliers actually
	evaluated during the call, including the nominal multiplier $1$, any
	under-relaxed and extrapolated search multipliers, and either type of terminal
	point just described. The conditions
	\eqref{eq:ls-risk-improvement}--\eqref{eq:ls-cost-condition} determine only
	whether extrapolation continues; they do not determine whether an already
	evaluated point remains eligible for final selection. Among all evaluated line-search points, choose
	\begin{equation}
		\label{eq:ls-best-alpha}
		\alpha_g^{\mathrm{ls}}
		\in\arg\min_{\alpha\in\mathcal E_g}
		\left(
		v_g^{\mathrm{ls}}(\alpha),
		\widehat J_g^{\mathrm{ls}}(\alpha),
		\alpha
		\right)
	\end{equation}
	lexicographically, where the last component gives deterministic tie
	breaking. The refined recombination point is then
	\begin{equation}
		\label{eq:ls-refined-mean}
		m_{g+1}^{\mathrm{ls}}
		:=m_g+\alpha_g^{\mathrm{ls}}\sigma_gM_g\bar z_g.
	\end{equation}
	Hence the line search can only retain the nominal \texttt{MA-ES} point or
	replace it by a selected extrapolated point along the same recombination
	direction. No random sign reversal is used. This is intentional:
	$+d_g^{\mathrm{rec}}$ points toward the rank-weighted selected population,
	whereas searching $-d_g^{\mathrm{rec}}$ would be a separate rescue mechanism
	rather than a refinement of the \texttt{MA-ES} recombination point.
	
\paragraph{Insertion into Step (S5).}
The line search is performed after the generation-$g$ race has stopped and
a single admissible order has been chosen, but before the next population
is sampled. The implementation should use the following order.
\begin{enumerate}[label=(S5$_{\arabic*}$),leftmargin=2.8em,itemsep=2pt,topsep=4pt]
	\item Compute $\bar z_g$ and $S_g$ from the chosen order exactly as in
	Section~\ref{sec:racing}, and compute the standard state updates
	$s_{g+1}$, $M_{g+1}$, and $\sigma_{g+1}$ from
	\eqref{eq:path}--\eqref{eq:scale}.
	\item Form $d_g^{\mathrm{rec}}=M_g\bar z_g$ and the nominal mean
	$m_{g+1}^{\mathrm{MA}}$ in~\eqref{eq:ls-recombination-direction}.
	\item Apply the line search only when the recombination direction is
	rank-certified. If it is not rank-certified, skip the line search and set
	$m_{g+1}=m_{g+1}^{\mathrm{MA}}$.
	\item When the recombination direction is rank-certified, evaluate the
	extrapolation sequence \eqref{eq:ls-expansion} on the stored
	generation-$g$ scenario prefix. Use
	\eqref{eq:ls-risk-improvement}--\eqref{eq:ls-cost-condition} only to decide
	whether extrapolation continues, retain every evaluated multiplier including
	the terminal one, and compute $\alpha_g^{\mathrm{ls}}$ over the resulting
	set $\mathcal E_g$ from~\eqref{eq:ls-best-alpha}.
	\item Set $m_{g+1}=m_{g+1}^{\mathrm{ls}}$ and leave
	$s_{g+1}$, $M_{g+1}$, and $\sigma_{g+1}$ unchanged from the standard
	\texttt{MA-ES} formulas. The line-search observations are not inserted
	back into the already completed generation-$g$ racing graph.
	\item Sample generation $g+1$ from the resulting state in the usual way.
\end{enumerate}
	Thus the line search changes only the next sampling center and not the
	rank weights, evolution path, transformation update, or scale update.
	The same stored scenarios and the same stored standard-normal \texttt{MPPI}
	base draws should be reused for all $\alpha$ tested in one line search, so
	that differences between trial points are not confounded by a changing
	scenario realization.
	
	\paragraph{Why rank certification preserves the present update theory.}
	The exact-direction gate is important. On the simultaneous coverage event,
	rank certification of the recombination direction implies
	$\widehat{\bar z}_g=\bar z_g^*$ by~\eqref{eq:stat-bounds}. Therefore the
	chosen-order and true-order recombination directions are identical. Since
	\eqref{eq:ls-expansion}, the deterministic nonmonotone memory and reference,
	the scenario prefix, and the tie breaking are fixed once the incoming history
	and generation information are given,
both orderings generate the same evaluated multiplier set $\mathcal E_g$
and the same selected multiplier $\alpha_g^{\mathrm{ls}}$. Consequently, for the line-search-refined mean, $E_z=0$ results in $\widehat m_{g+1}^{\mathrm{ls}}
		=m_{g+1}^{\mathrm{ls},*}$ on the same coverage event. The bounds for the path, matrix, and scale are
	unchanged because those three quantities continue to use the original
	\texttt{MA-ES} recursions.
	
	If the line search were instead allowed when $E_z>0$, the existing mean
	bound would no longer by itself control the refined mean. If
	$\widehat\alpha_g$ and $\alpha_g^*$ denote the multipliers obtained from
	the chosen and true orderings, respectively, then a direct decomposition
	gives
	\begin{equation}
		\label{eq:ls-uncertain-alpha-bound}
		\left\|
		\widehat m_{g+1}^{\mathrm{ls}}-m_{g+1}^{\mathrm{ls},*}
		\right\|_2
		\le
		\sigma_g\|M_g\|_2
		\left(
		\widehat\alpha_g E_z
		+|\widehat\alpha_g-\alpha_g^*|\,\|\bar z_g^*\|_2
		\right).
	\end{equation}
	The present theory contains no nontrivial bound on
	$|\widehat\alpha_g-\alpha_g^*|$, because a small perturbation of the
	recombination direction can change which extrapolation point first violates
	the line-search condition. We therefore do not use a tolerance such as
	``small $E_z$'' as a justification for the line search. The exact gate
	$E_z=0$ keeps the extension compatible with the current
	population-conditional rank-to-update analysis without introducing a new
	line-search stability assumption.
	
\begin{corollary}[Complete Step~(S5) discrepancy]
	\label{cor:s5-discrepancy}
	On the simultaneous coverage event, consider the chosen-order
	and true-order updates from the same incoming search state
	and with the same scenario and nonmonotone-memory history.
	
	(i) If $E_z>0$, the line search is skipped and the bounds of
	Proposition~\ref{prop:update} apply directly.
	
	(ii) If $E_z=0$, then $E_S=0$, and the chosen- and true-order
	recombination directions coincide. Under the prescribed
	deterministic line-search rule, both updates evaluate the same
	trial points and select the same multiplier. Consequently,
	\[
	\widehat m_{g+1}^{\mathrm{ls}}=m_{g+1}^{\mathrm{ls},*},
	\qquad
	\widehat s_{g+1}=s_{g+1}^*,
	\qquad
	\widehat M_{g+1}=M_{g+1}^*,
	\qquad
	\widehat\sigma_{g+1}=\sigma_{g+1}^*.
	\]
\end{corollary}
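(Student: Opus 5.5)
The argument splits on the two cases of the statement and draws almost everything from Proposition~\ref{prop:update}; the one nontrivial point is that $E_z=0$ forces $E_S=0$.

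\emph{Case (i).} If $E_z>0$, Step (S5$_3$) skips the line search, so $m_{g+1}=m_{g+1}^{\mathrm{MA}}$ is produced by the standard recursion~\eqref{eq:mean} for both the chosen and the true ordering. All four updated quantities are then exactly those treated in Proposition~\ref{prop:update}. Its bounds \eqref{eq:stat-bounds}--\eqref{eq:scale-bound} apply verbatim on the coverage event, and nothing more is needed.

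\emph{Case (ii): $E_z=0$ implies $E_S=0$.} The $z_i^{(g)}$ are Gaussian draws, so with probability one every $\|z_i\|_2>0$. Since $E_z=\sum_i b_i\|z_i\|_2$ is a sum of nonnegative terms, $E_z=0$ forces $b_i=0$ for every $i$, and hence $E_S=\sum_i b_i\|z_i\|_2^2=0$. I expect this to be the main obstacle, because it depends on the almost-sure nonvanishing of the $z_i$. If one wants to avoid the probability-one qualifier, one can argue per index instead: any $i$ with $z_i=0$ contributes nothing to either $\widehat{\bar z}_g$ or $\widehat S_g$. In that form the conclusion becomes $\widehat S_g=S_g^*$ directly, rather than literally $E_S=0$.

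\emph{Case (ii): equality of the four updates.} Proposition~\ref{prop:update} now gives $\widehat{\bar z}_g=\bar z_g^*$ and $\widehat S_g=S_g^*$. Because both orderings start from the same incoming state $(m_g,s_g,M_g,\sigma_g)$, the recursions \eqref{eq:path}--\eqref{eq:scale} are deterministic functions of $(\bar z_g,S_g)$ and that state. Hence $\widehat s_{g+1}=s_{g+1}^*$, $\widehat M_{g+1}=M_{g+1}^*$ and $\widehat\sigma_{g+1}=\sigma_{g+1}^*$. Equivalently, set $h=0$ and $E_S=0$ in \eqref{eq:mean-path-bounds}--\eqref{eq:scale-bound}.

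\emph{Case (ii): equality of the refined means.} The direction $d_g^{\mathrm{rec}}=M_g\bar z_g$ is common to both orderings, so the trial points $y_g(\alpha)$ in~\eqref{eq:ls-trial-point} coincide for every $\alpha$. The remaining inputs are also common to both orderings:
\begin{enumerate}[label=(\alph*),leftmargin=2em,itemsep=1pt,topsep=2pt]
\item the stored scenario prefix and the stored \texttt{MPPI} base draws;
\item the FIFO memory $\mathcal H_g^{(m_{\mathrm{nm}})}$, whose population costs do not depend on the ordering;
\item the reference $J_g^{\mathrm{nm}}$, which depends only on $\widehat J_g^{\mathrm{ls}}(1)$ and that memory;
\item the multiplier schedule \eqref{eq:ls-expansion} and the cap $\alpha_{\max}$.
\end{enumerate}
With these fixed, I would argue by induction on $q$ that both runs evaluate the same $\alpha_q$ and obtain identical statistics $(v_g^{\mathrm{ls}},\widehat J_g^{\mathrm{ls}})$. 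They therefore make identical continuation decisions under \eqref{eq:ls-risk-improvement}--\eqref{eq:ls-cost-condition} and the cap. It follows that $\mathcal E_g$ is the same for both orderings, and the lexicographic rule~\eqref{eq:ls-best-alpha}, with its deterministic tie-break, yields the same $\alpha_g^{\mathrm{ls}}$. Substituting into~\eqref{eq:ls-refined-mean} gives $\widehat m_{g+1}^{\mathrm{ls}}=m_{g+1}^{\mathrm{ls},*}$.
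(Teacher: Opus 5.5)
Your proof is correct and follows essentially the paper's route: case (i) comes from the activation gate plus Proposition~\ref{prop:update}; case (ii) uses $E_z=E_S=0$ and the proposition for $\bar z_g$, $S_g$ and the three state updates, then notes that the deterministic line search receives identical inputs under both orderings. The one misstep is your ``main obstacle'': since $b_i\|z_i\|_2^2=\|z_i\|_2\,(b_i\|z_i\|_2)$ vanishes whenever $b_i\|z_i\|_2=0$, the paper obtains $E_S=0$ term by term, literally and without any almost-sure nonvanishing of the $z_i$, so your claim that the per-index argument yields only $\widehat S_g=S_g^*$ rather than $E_S=0$ is mistaken.
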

\begin{proof}
	Part (i) follows from the line-search activation rule and
	Proposition~\ref{prop:update}. For part (ii), each term
	$b_i\|z_i\|_2$ in the definition of $E_z$ is nonnegative.
	Thus $E_z=0$ implies $b_i\|z_i\|_2=0$ for every $i$,
	and consequently $E_S=0$. Proposition~\ref{prop:update}
	then gives equality of the standard recombination direction,
	evolution path, transformation matrix, and global scale.
	With a common incoming history and scenario prefix, the
	deterministic line-search construction evaluates the same
	points and selects the same multiplier under both orderings.
	The refined means therefore coincide.
\end{proof}
	
	\paragraph{Evaluation accounting.}
	The line search is derivative free but not evaluation free. Let $Q_g$ denote
	the total number of line-search points evaluated in generation $g$, including
	the nominal point $\alpha=1$ whenever the line search is activated. Each is
	evaluated on the same $K_g$ complete scenarios, so the line search adds
	$Q_gK_g$ complete robot trials.
	If $G\le G_{\max}$ generations are executed, then whenever the
	rank-certification gate activates this refinement, the total training cost
	must therefore be reported as
	\begin{equation}
		\label{eq:ls-training-budget}
		N_{\mathrm{train}}^{\mathrm{ls}}
		=
		\sum_{g=0}^{G-1}(\lambda+Q_g)K_g,
		\qquad 0\le Q_g\le Q_{\max},
	\end{equation}
	instead of attributing these evaluations to the statistical race. The
	independent development and validation data remain untouched. In
	particular, the line-search scenario reuse is an optimization device and
	must never be counted as independent safety validation.

\subsection{(S6): Independent selection and risk validation}
\label{sec:validation}

	The data used to tune a controller cannot also provide an independent final
	safety assessment. The goal of this step is therefore to select one controller,
	freeze all of its settings, and evaluate it without further adaptation. We use
	separate development data for selection and an untouched fixed-size test set
	for the final risk statement.

With $K_g$ scenarios used by the stopped race in generation $g$ and $Q_g$ line-search points evaluated in that generation, the complete training search uses
	\[
	N_{\mathrm{train}}
	=
	\sum_{g=0}^{G-1}(\lambda+Q_g)K_g,
	\qquad 0\le Q_g\le Q_{\max},
	\]
complete robot trials,
where $G\le G_{\max}$ and $K_g\le K_{L_{\mathrm{look}}}$. When the line search is not activated, $Q_g=0$, and this reduces to $\lambda\sum_{g=0}^{G-1}K_g$. Each complete trial entails at most $TRBH$ internal \texttt{MPPI} prediction-model steps. The statistical evaluator and \texttt{MA-ES} are offline; neither adds a step to deployed \texttt{MPPI}. Real-time operation still requires timing measurements on the target hardware.

Re-evaluate a predeclared shortlist on independent development scenarios
and select one controller. Then freeze its parameters, estimator settings,
failure definition, scenario law, and online sampling budget. Let $\omega'_1,\ldots,\omega'_N$ denote the final untouched test scenarios,
with fixed, predeclared size $N$. Then
\begin{equation}
	\label{eq:validation}
	\widehat J_{\mathrm{val}}=\frac1N\sum_{r=1}^NC(\theta,\omega'_r),\qquad
	F_N=\sum_{r=1}^N I_{\mathrm{fail}}(\theta,\omega'_r),\qquad
	\widehat p_{\mathrm{val}}=F_N/N.
\end{equation}
For $0<\alpha_v<1$, compute the one-sided Clopper--Pearson upper bound
\cite{clopper1934}
\begin{equation}
	\label{eq:cp-validation}
	U_N(f;\alpha_v)=\begin{cases}
		\operatorname{B}^{-1}(1-\alpha_v;f+1,N-f),& f<N,\\
		1,& f=N.
	\end{cases}
\end{equation}
Accept the stated chance constraint only when $U_N(F_N;\alpha_v)\le\delta$.

\begin{proposition}[Independent-test false acceptance]
	\label{prop:validation}
	If the frozen controller has true failure probability $p>\delta$ under
	the test law and the $N$ test scenarios are i.i.d. and independent of all
	selection data, then
	$\mathbb P\{U_N(F_N;\alpha_v)\le\delta\}\le\alpha_v$.
\end{proposition}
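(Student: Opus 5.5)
The plan is to reduce the statement to a monotonicity property of the one-sided Clopper--Pearson bound together with the binomial/beta duality. First, condition on all selection data, which fixes the frozen controller and hence $p$. By independence of the test scenarios from the selection data and the i.i.d.\ assumption, $F_N\sim\operatorname{Bin}(N,p)$ conditionally. It then suffices to show $\mathbb P_p\{U_N(F_N;\alpha_v)\le\delta\}\le\alpha_v$ for every fixed $p>\delta$; the unconditional statement follows by averaging over the selection data.

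Second, I will show that $f\mapsto U_N(f;\alpha_v)$ is nondecreasing, using the standard stochastic ordering of beta quantiles in the first shape parameter together with $U_N(N;\alpha_v)=1$. Since $\delta<p\le1$, the set $\{f:U_N(f;\alpha_v)\le\delta\}$ is therefore an initial segment $\{0,\dots,f^*\}$ with $f^*<N$, or it is empty, in which case the claim is trivial.

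Third, I will use the identity linking the binomial lower tail with the beta distribution function,
\[
\mathbb P_q\{F\le f\}=1-\operatorname{B}(q;f+1,N-f),\qquad f<N,
\]
where $\operatorname{B}(\cdot;a,b)$ denotes the beta CDF. By the definition of the quantile, $U_N(f;\alpha_v)\le\delta$ gives $\operatorname{B}(\delta;f^*+1,N-f^*)\ge 1-\alpha_v$ at $f=f^*$; continuity of the beta CDF is what makes this step clean. Hence $\mathbb P_\delta\{F\le f^*\}\le\alpha_v$.

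Finally, the binomial lower tail $\mathbb P_q\{F\le f^*\}$ is nonincreasing in $q$, so for $p>\delta$ we get $\mathbb P_p\{F_N\le f^*\}\le\mathbb P_\delta\{F\le f^*\}\le\alpha_v$, which is the claim.

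The main obstacle is the careful justification of the beta--binomial identity and the direction of the inequalities at the quantile boundary, including the edge cases $f=N$ and $f^*=-1$. These are classical, so I would cite \cite{clopper1934} for the identity rather than rederive it.
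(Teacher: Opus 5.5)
Your argument is correct. It shares the paper's skeleton: condition on the selection data so that $F_N\sim\operatorname{Bin}(N,p)$, bound the conditional acceptance probability, then average. The difference is in the middle step.

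The paper does that step in one line. It invokes exact one-sided Clopper--Pearson coverage, $\mathbb P\{p>U_N(F_N;\alpha_v)\}\le\alpha_v$, and notes that for $p>\delta$ the acceptance event $\{U_N(F_N;\alpha_v)\le\delta\}$ is contained in the noncoverage event $\{U_N(F_N;\alpha_v)<p\}$.

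You instead re-derive the needed bound directly at the threshold $\delta$, in three moves:
\begin{itemize}
\item the acceptance set is an initial segment $\{0,\dots,f^*\}$;
\item the beta--binomial identity gives $\mathbb P_\delta\{F\le f^*\}\le\alpha_v$;
\item stochastic monotonicity of the binomial in $q$ transfers this bound to any $p>\delta$.
\end{itemize}

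The paper's route is shorter and shows the result is purely a coverage statement. It therefore holds verbatim for any upper bound with exact one-sided coverage. Your route is self-contained, makes explicit where $p>\delta$ is used, and handles the edge cases $f=N$ and an empty acceptance set.

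One small repair in your second step. Going from $f$ to $f+1$ changes both shape parameters, $f+1\to f+2$ and $N-f\to N-f-1$, so citing the ordering in the first shape parameter alone is incomplete. Both changes push the same way, so the claim holds. More simply, monotonicity follows from your own identity. For $f<N$, $U_N(f;\alpha_v)\le\delta$ is equivalent to $\mathbb P_\delta\{F\le f\}\le\alpha_v$, because the beta CDF is continuous and strictly increasing on $(0,1)$. Since the left side is nondecreasing in $f$, the acceptance set is an initial segment without any separate appeal to beta stochastic ordering.
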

\begin{proof}
	Condition on the frozen controller. Exact one-sided binomial coverage gives
	$\mathbb P\{p>U_N(F_N;\alpha_v)\}\le\alpha_v$. When $p>\delta$, the
	acceptance event is contained in that noncoverage event. The same statement
	holds after averaging over any independent training and development
	selection procedure.
\end{proof}
For zero failures, $U_N(0;\alpha_v)=1-\alpha_v^{1/N}$. Thus even in that
most favorable case acceptance requires
\begin{equation}
	\label{eq:zero-failures}
	N\ge\left\lceil{\log\alpha_v}/{\log(1-\delta)}\right\rceil.
\end{equation}
This is a frequentist false-acceptance guarantee, not a posterior
probability that a controller is safe. It does not establish zero risk,
continuous-time collision avoidance, transfer from simulation to reality,
or validity under distribution shift. Testing repeatedly until a controller
passes is not covered by the fixed-$N$ rule. Reusing test data for tuning,
multiple-controller selection, or repeated claims requires a new
prespecified multiplicity or sequential-testing design.

These six steps complete the proposed tuning and validation procedure. We next instantiate the complete construction in a controlled robot-navigation study and examine what the available evaluation budget can and cannot demonstrate.


\section{Computational complexity of the proposed framework}
\label{sec:computational-complexity}

\paragraph{Motivation and objective.}
Computational complexity is particularly important in robotic control
because the controller must generate commands within a prescribed
sampling period, whereas offline parameter tuning may require many
complete closed-loop simulations. In the proposed framework,
\texttt{MPPI} performs sampling-based optimization at every control
instant, while the outer \texttt{MA-ES} procedure repeatedly evaluates
complete robot trials. Statistical racing and the nonmonotone line
search introduce additional computations and evaluations, but do not
alter the prescribed online \texttt{MPPI} sampling budget.

The objective of this section is to quantify the worst-case
computational effort required by these operations. We distinguish
the cost of one online control update, the cost of a complete
closed-loop trial, and the total cost of the offline tuning procedure.
The resulting bound characterizes arithmetic work under a finite
evaluation budget; it is not an evaluation-complexity bound for
attaining a globally optimal or approximately optimal controller.

\paragraph{Computational model and intuition.}
Let $T$ denote the maximum number of executed control instants
in a complete trial, $H$ the \texttt{MPPI} prediction horizon,
$B$ the number of sampled command sequences per refinement pass,
and $R$ the number of refinement passes at each control instant.
Let $N_o$ denote the maximum number of obstacles in a scenario,
and let $N_\Gamma$ denote the maximum number of reference-route
segments inspected during a route-distance evaluation.

The principal computational operation of \texttt{MPPI} is the
evaluation of sampled predicted trajectories. At each control
instant, the planner propagates $B$ trajectories of length $H$
during each of its $R$ refinement passes. Consequently, at most
$RBH$ sampled prediction transitions are evaluated per control
instant and $TRBH$ per complete closed-loop trial.

For the fixed-dimensional kinematic robot models considered here,
assume that one predicted transition, including its planning score,
obstacle-clearance evaluation, and route-distance computation,
requires at most
$\mathcal{O}(N_o+N_\Gamma+1)$ arithmetic operations.
We assume the same upper bound for an executed robot transition,
including collision checking and evaluation of the outer task cost.
These assumptions correspond to constant-cost kinematic state
updates and direct processing of the relevant obstacle and
route geometry.

The offline optimization procedure performs at most $G_{\max}$
generations with population size $\lambda$, at most
$L_{\mathrm{look}}$ statistical racing looks per generation,
and at most $K_{\max}$ complete trials per candidate.
The standard \texttt{MA-ES} search dimension is $n$.
Let $m_{\mathrm{nm}}\geq1$ denote the prescribed
nonmonotone-memory length, measured in generations.

The total complete-trial budget must include both the population
evaluations and the additional points evaluated by the
nonmonotone line search. We use the evaluation-accounting
convention already established in
\eqref{eq:ls-training-budget}, and let $E_{\max}$ be a hard
upper bound on that total training count. In particular,
$N_{\mathrm{train}}^{\mathrm{ls}}\leq E_{\max}$.
Independent development and final-validation trials are
excluded from this training budget.

\begin{proposition}[Worst-case computational complexity]
	\label{prop:computational-complexity}
	Consider the proposed framework under the computational
	model specified above. Assume that the robot state and
	control dimensions are fixed, and that predicted and executed
	robot transitions, including obstacle and route processing,
	require at most
	$\mathcal{O}(N_o+N_\Gamma+1)$ arithmetic operations.
	
	Assume further that the \texttt{MA-ES} transformation-matrix
	update is evaluated using conventional dense matrix
	operations, that pairwise racing statistics are recomputed
	directly at each look, and that the comparison graph is
	processed using a cubic-time transitive-closure algorithm.
	Let $C_{\mathrm{CP}}$ denote an upper bound on the
	arithmetic work required to compute one numerical
	Clopper--Pearson confidence endpoint using the
	prescribed numerical tolerance and quantile algorithm.
	
	Then the worst-case arithmetic complexity of the offline
	training procedure is bounded by
	\begin{equation}
		\label{eq:total-computational-complexity}
		\begin{aligned}
			\mathcal{O}\Bigl(
			&E_{\max}\bigl[TRBH(N_o+N_\Gamma+1)+n\bigr]+G_{\max}\Bigl[
			\lambda n^2+n^3\\
			&\quad+L_{\mathrm{look}}
			\bigl(\lambda^2K_{\max}+\lambda^3+n^3
			+\lambda C_{\mathrm{CP}}\bigr)+m_{\mathrm{nm}}\lambda
			\log(m_{\mathrm{nm}}\lambda+1)
			\Bigr]\Bigr).
		\end{aligned}
	\end{equation}
	In particular, the arithmetic complexity of one online
	\texttt{MPPI} control update is
	\begin{equation}
		\label{eq:mppi-online-complexity}
		\mathcal{O}\bigl(RBH(N_o+N_\Gamma+1)\bigr).
	\end{equation}
\end{proposition}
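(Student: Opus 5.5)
The bound is obtained by counting operations stage by stage and summing, working from the innermost \texttt{MPPI} computation outward. First I would establish \eqref{eq:mppi-online-complexity}. In one refinement pass, each of the $B$ samples needs $H$ Gaussian perturbations of fixed dimension, hence $\mathcal O(BH)$ work. It then needs $H$ predicted transitions and score terms \eqref{eq:mppi-rollout}--\eqref{eq:mppi-cost}, which cost $\mathcal O(H(N_o+N_\Gamma+1))$ by assumption. The importance correction \eqref{eq:mppi-correction} costs $\mathcal O(H)$ per sample, since $\Sigma_\theta$ is a fixed-size diagonal matrix. The minimum, the normalized weights \eqref{eq:mppi-weights}, and the update \eqref{eq:mppi-update} together cost $\mathcal O(BH)$. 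Summing over $R$ passes and adding the $\mathcal O(H)$ warm-start shift \eqref{eq:mppi-apply} gives $\mathcal O(RBH(N_o+N_\Gamma+1))$.

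Next, one complete trial has at most $T$ control instants. Each instant adds an executed transition, a swept-segment collision check, and an outer-cost update, all $\mathcal O(N_o+N_\Gamma+1)$, which the planner term absorbs. A trial therefore costs $\mathcal O(TRBH(N_o+N_\Gamma+1))$. Every training trial, whether a racing evaluation or a line-search evaluation, is counted by $N_{\mathrm{train}}^{\mathrm{ls}}\le E_{\max}$ via \eqref{eq:ls-training-budget}. Forming each trial's candidate requires the latent-to-physical map \eqref{eq:parameter-map} and the point $y_g(\alpha)$. I would charge the $\mathcal O(n)$ map per trial, which gives the $E_{\max}n$ term. The matrix--vector product $M_gz_i$ or $M_g\bar z_g$ is not charged per trial: it is computed once per candidate, and the scalar multiples along $d_g^{\mathrm{rec}}$ cost $\mathcal O(n)$.

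The per-generation work does not scale with trials.
\begin{itemize}
\item \emph{Sampling and adaptation.} Sampling $\lambda$ candidates costs $\mathcal O(\lambda n^2)$. The recombination $\widehat{\bar z}_g$, $\widehat S_g$ costs $\mathcal O(\lambda n^2)$. The updates \eqref{eq:mean}--\eqref{eq:scale} cost $\mathcal O(n^3)$ because of the dense product in \eqref{eq:matrix}; $\chi_n$ is a constant computed once.
\item \emph{Per-look racing.} At each of at most $L_{\mathrm{look}}$ looks, recomputing the paired differences $\widehat d_{ij}$ from scratch over $\binom{\lambda}{2}$ pairs and $K\le K_{\max}$ scenarios costs $\mathcal O(\lambda^2K_{\max})$. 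The $2\lambda$ Clopper--Pearson endpoints cost $\mathcal O(\lambda C_{\mathrm{CP}})$.
\item \emph{Order and bounds.} Building the edges \eqref{eq:dominance} costs $\mathcal O(\lambda^2)$ and the cubic transitive closure $\mathcal O(\lambda^3)$. The greedy linear extension, the rank ranges \eqref{eq:rank-range}, and the quantities $b_i,E_z,E_S$ cost at most $\mathcal O(\lambda^2+\lambda n)$. The right-hand sides of \eqref{eq:mean-path-bounds}--\eqref{eq:scale-bound} require $\|M_g\|_2$ and $\widehat s_{g+1}$. Spectral norms computed by a dense SVD cost $\mathcal O(n^3)$, which accounts for the per-look $n^3$ term. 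Since $\lambda\le$ a function of $n$, the term $\lambda n$ is dominated.
\item \emph{Line-search reference.} The FIFO memory \eqref{eq:ls-memory} holds at most $m_{\mathrm{nm}}\lambda$ values. Sorting it to obtain the median, minimum and maximum costs $\mathcal O(m_{\mathrm{nm}}\lambda\log(m_{\mathrm{nm}}\lambda+1))$; the $+1$ handles the case $m_{\mathrm{nm}}\lambda=1$. The remaining line-search bookkeeping \eqref{eq:ls-expansion}--\eqref{eq:ls-best-alpha} costs $\mathcal O(Q_{\max}+n)$ beyond the trial evaluations already counted.
\end{itemize}

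Summing over at most $G_{\max}$ generations and adding the trial work gives \eqref{eq:total-computational-complexity}. The main obstacle is bookkeeping, in two places. First, every per-trial and per-point contribution must be covered by $E_{\max}$ or by the generation terms. In particular, the line-search extra $\mathcal O(Q_{\max}+n)$ and the per-trial statistic accumulation $\mathcal O(1)$ per trial fall under $E_{\max}$ or $n^3$, and this has to be checked explicitly. Second, the per-look cost-sum recomputation must be bounded by $K_{\max}$ rather than by the cumulative $K$.
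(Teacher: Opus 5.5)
Your proposal is correct and follows the same stage-by-stage accounting as the paper's proof. It has the same online-pass and complete-trial bounds, the same $E_{\max}$ budget absorbing all simulations and the $\mathcal{O}(n)$ point construction, the same $\mathcal{O}(\lambda n^2+n^3)$ adaptation, the same per-look $\lambda^2K_{\max}$, $\lambda^3$, SVD-based $n^3$ and $\lambda C_{\mathrm{CP}}$ terms, and the same memory-sorting cost.

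The bookkeeping you flag closes exactly as in the paper: charge line-search overhead to the realized $Q_g$ rather than $Q_{\max}$, and use $\sum_g Q_g\le\sum_g Q_gK_g\le E_{\max}$ because $K_g\ge1$. The $\lambda n$ term is absorbed simply because $\lambda n\le\lambda^3+n^3$, so you do not need $\lambda$ to depend on $n$.
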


\begin{proof}
	At each control instant, \texttt{MPPI} performs $R$
	sampling/update passes, each propagating $B$ predicted
	trajectories over $H$ steps. Each predicted transition
	and its associated planning-score evaluation require at
	most $\mathcal{O}(N_o+N_\Gamma+1)$ operations.
	Gaussian perturbation generation, importance-weight
	computation, and nominal-command updating require
	$\mathcal{O}(RBH)$ additional operations because the
	control dimension is fixed. This establishes
	\eqref{eq:mppi-online-complexity}.
	
	A complete closed-loop trial contains at most $T$
	control instants. The executed-state propagation,
	collision checking, and outer-cost evaluation are
	absorbed into the per-control-instant bound. Hence,
	one complete trial requires at most
	$\mathcal{O}(TRBH(N_o+N_\Gamma+1))$ operations.
	By the hard training-budget assumption, the total
	robotic-simulation cost is bounded by
	$\mathcal{O}(E_{\max}TRBH(N_o+N_\Gamma+1))$.
	
	Within each generation, drawing and transforming
	$\lambda$ Gaussian mutation vectors requires
	$\mathcal{O}(\lambda n^2)$ operations. Forming the
	weighted second-moment matrix requires at most
	$\mathcal{O}(\lambda n^2)$ additional operations.
	The standard \texttt{MA-ES} mean, path, matrix, and
	scale recursions are given in
	\eqref{eq:mean}--\eqref{eq:scale}.
	Evaluating the transformation-matrix recursion
	\eqref{eq:matrix} through a conventional dense
	matrix--matrix multiplication requires
	$\mathcal{O}(n^3)$ operations.
	The remaining state updates are absorbed into
	these bounds. Thus, the standard per-generation
	adaptation cost is bounded by
	$\mathcal{O}(\lambda n^2+n^3)$.
	
	At each racing look, directly recomputing all
	pairwise cost-difference statistics using at most
	$K_{\max}$ observations per candidate requires
	$\mathcal{O}(\lambda^2K_{\max})$ operations.
	Computing the transitive closure of the comparison
	graph requires $\mathcal{O}(\lambda^3)$ operations.
	The admissible rank intervals, chosen ordering,
	and update-discrepancy bounds can be computed within
	these bounds, apart from the spectral-norm calculation.
	
	The spectral norm of the current transformation matrix,
	which enters the update-discrepancy bounds, can be
	computed by a standard singular-value decomposition
	in $\mathcal{O}(n^3)$ operations.
	Computing the risk-confidence endpoints for the
	$\lambda$ candidates requires at most
	$\mathcal{O}(\lambda C_{\mathrm{CP}})$ operations.
	Consequently, the racing overhead per generation is
	bounded by
	\[
	\mathcal{O}\left(
	L_{\mathrm{look}}
	\left[
	\lambda^2K_{\max}+\lambda^3+n^3
	+\lambda C_{\mathrm{CP}}
	\right]
	\right).
	\]
	
	The nonmonotone reference defined in
	\eqref{eq:ls-memory}--\eqref{eq:ls-min-med-max}
	contains at most $m_{\mathrm{nm}}\lambda$
	stored population-cost values. Computing the
	required minimum, median, and maximum by sorting
	requires at most
	\[
	\mathcal{O}\left(
	m_{\mathrm{nm}}\lambda
	\log(m_{\mathrm{nm}}\lambda+1)
	\right)
	\]
	operations per generation.
	
	Each evaluated line-search point defined by
	\eqref{eq:ls-trial-point} requires
	$\mathcal{O}(n)$ additional arithmetic operations
	to form its latent coordinates and transform them
	to admissible controller parameters.
	
	By the evaluation accounting in
	\eqref{eq:ls-training-budget}, and since each
	line-search point is evaluated on $K_g\geq1$
	complete scenarios,
	\[
	\sum_{g=0}^{G-1}Q_g
	\leq
	\sum_{g=0}^{G-1}Q_gK_g
	\leq E_{\max}.
	\]
	Therefore, the total line-search point-construction
	overhead is bounded by $\mathcal{O}(E_{\max}n)$.
	The corresponding closed-loop evaluations are
	already counted in the robotic-simulation cost.
	
	Finally, summing the adaptation, racing, and
	nonmonotone-memory overhead over at most
	$G_{\max}$ generations and adding the total
	simulation and line-search point-construction costs
	establishes \eqref{eq:total-computational-complexity}.
\end{proof}

\paragraph{Interpretation and practical implications.}
The proposition separates the computation required by
the online controller from the additional computation
introduced by offline parameter optimization.
Statistical racing and nonmonotone recombination
refinement do not change the prescribed number of
\texttt{MPPI} samples, refinement passes, or prediction
steps at an individual control instant. Instead,
they affect how the available complete-trial budget
is distributed among population evaluations and
additional recombination trial points.

For the exact-budget numerical benchmark,
$T=35$, $H=8$, $B=18$, and $R=1$. Thus, at most
\[
RBH=144
\]
sampled prediction transitions are evaluated per
control instant, and
\[
TRBH=5040
\]
per complete closed-loop trial. Under the common
training budget $E_{\max}=432$, the maximum number
of sampled prediction transitions during training is
\[
E_{\max}TRBH=2\,177\,280
\]
per solver--model run. These are counts of predicted
robot transitions, not processor operations or
measured runtimes. Early trial termination may
reduce the actual number of transitions evaluated.

For the six-dimensional tuning problem considered
here, conventional dense matrix operations are
relatively inexpensive compared with repeated
closed-loop simulations. Nevertheless, the
complexity bound explicitly retains the cubic
dependence on $n$ arising from dense matrix
multiplication and spectral-norm computations.
Exploiting the rank-one structure of the standard
matrix-adaptation recursion could reduce part of
this overhead without changing the algorithmic
recursion itself.

Finally, this analysis concerns the computational
work of the finite-budget offline training
procedure. Independent development and
final-validation trials contribute additional
simulation work. The bound does not establish
real-time feasibility on a particular computing
platform, a worst-case optimization convergence
rate, global optimality, or a prescribed
optimality gap.

\section{Numerical study}
\label{sec:simulation}

We organize the numerical study around three outer-loop solvers and three
kinematic ground-vehicle models. The solvers are standard \texttt{MA-ES},
the same \texttt{MA-ES} recursion augmented by the frozen nonmonotone
line-search variant (\texttt{MA-ES+NMLS}), and \texttt{VRBBO}~3.2,
a randomized black-box line-search method~\cite{kimiaei2021vrbbo}.
The models are \texttt{TurtleBot~4}, a Jackal-like skid-steer abstraction,
and a kinematic \texttt{Ackermann} bicycle model. The principal
comparison uses the same eleven optimization seeds and the same hard
budget of 432 complete training trials for every solver--model run.
We first describe the common experimental protocol and specify all
\texttt{MA-ES} and \texttt{NMLS} settings, then present the earlier
update-aware proof-of-execution and large-budget \texttt{NMLS} studies,
and finally report the new three-solver benchmark. At the end of
the section, we provide performance-metric-specific recommendations
rather than a single unconditional ranking of the solvers.

\subsection{Representative post-selection trajectories for the three vehicle models}
\label{sec:statall-model-trajectories}

The trajectory panels provide geometric illustrations of the closed-loop
behavior and are not used in any development or final-validation statistic.
For each vehicle model, the eleven frozen model-specific
\texttt{MA-ES+NMLS} controllers from the exact-budget benchmark are replayed,
without any further optimizer update, on the same fixed fresh illustration
protocol with an 80-step horizon. Among the collision-free replays that enter
the declared goal tolerance, the displayed controller is selected as the one
whose held-out cost is closest to the corresponding model-wise
\texttt{MA-ES+NMLS} median. This procedure selects optimization seed~3 for
\texttt{TurtleBot~4}, seed~37 for the \texttt{Jackal-like} model, and seed~3
for the \texttt{Ackermann} model. Whenever an executed trajectory first enters
the declared goal tolerance, only the genuinely executed prefix through that
first entry is shown. No trajectory segment is extrapolated, smoothed, or
otherwise fabricated.

In every panel, the horizontal and vertical coordinates represent the robot
positions $x$ and $y$ in meters, respectively. The solid blue curve is the true
executed closed-loop trajectory, while the orange dashed polyline is the fixed
reference route supplied to the \texttt{MPPI} controller. The blue filled
circle marks the initial position and the orange star denotes the goal. The
solid black circle represents the physical obstacle, whereas the dashed black
circle is the corresponding collision boundary obtained by inflating the
obstacle by the robot radius. The robot radii are 0.245~m for
\texttt{TurtleBot~4}, 0.300~m for the \texttt{Jackal-like} model, and
0.245~m for the \texttt{Ackermann} model. All three displayed trajectories are
collision-free and enter their declared goal tolerance. Panels~(a)--(c)
correspond to frozen \texttt{MA-ES+NMLS} controllers from seeds~3, 37, and~3,
respectively, replayed on the fixed 80-step illustration scenario. Their
corresponding illustrative-trial costs are 41.16, 25.29, and 46.60. These
trajectories are included only for qualitative geometric illustration and do
not contribute to the development data, final-validation statistics, or
three-solver statistical comparisons.

\begin{figure}[t]
	\centering
	\begin{minipage}[t]{0.32\linewidth}
		\centering
		\includegraphics[width=\linewidth]
		{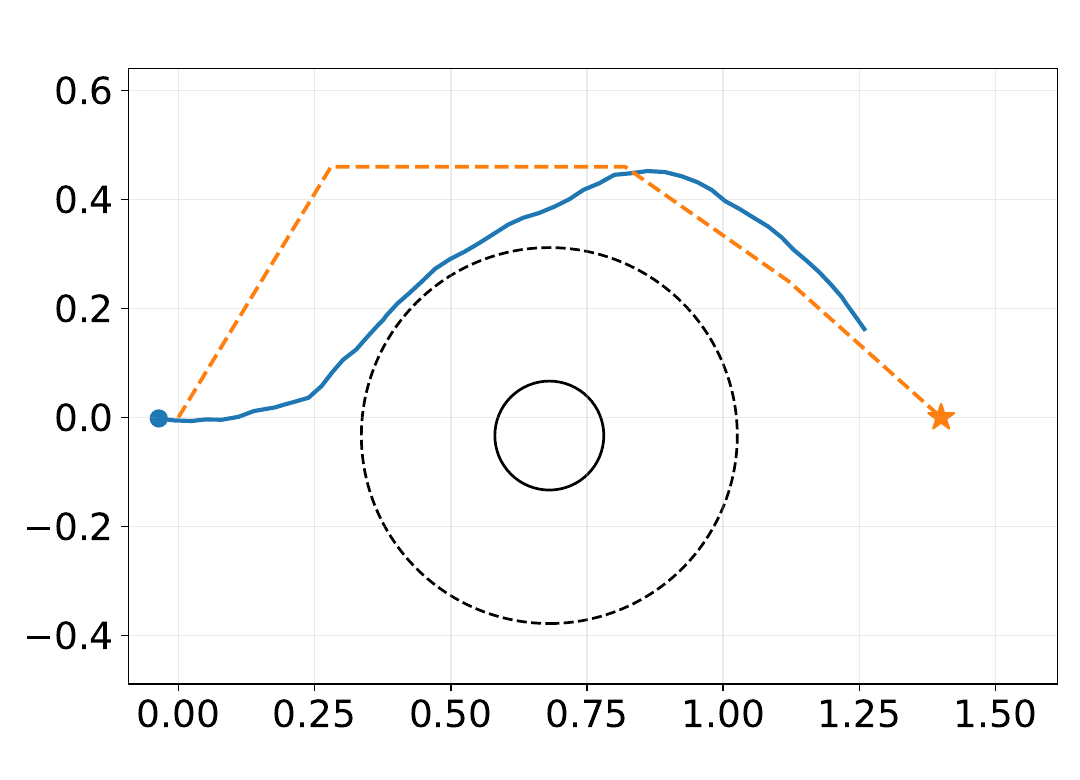}
		\small (a) \texttt{TurtleBot~4}: seed 3, illustrative-trial cost 41.16.
	\end{minipage}\hfill
	\begin{minipage}[t]{0.32\linewidth}
		\centering
		\includegraphics[width=\linewidth]
		{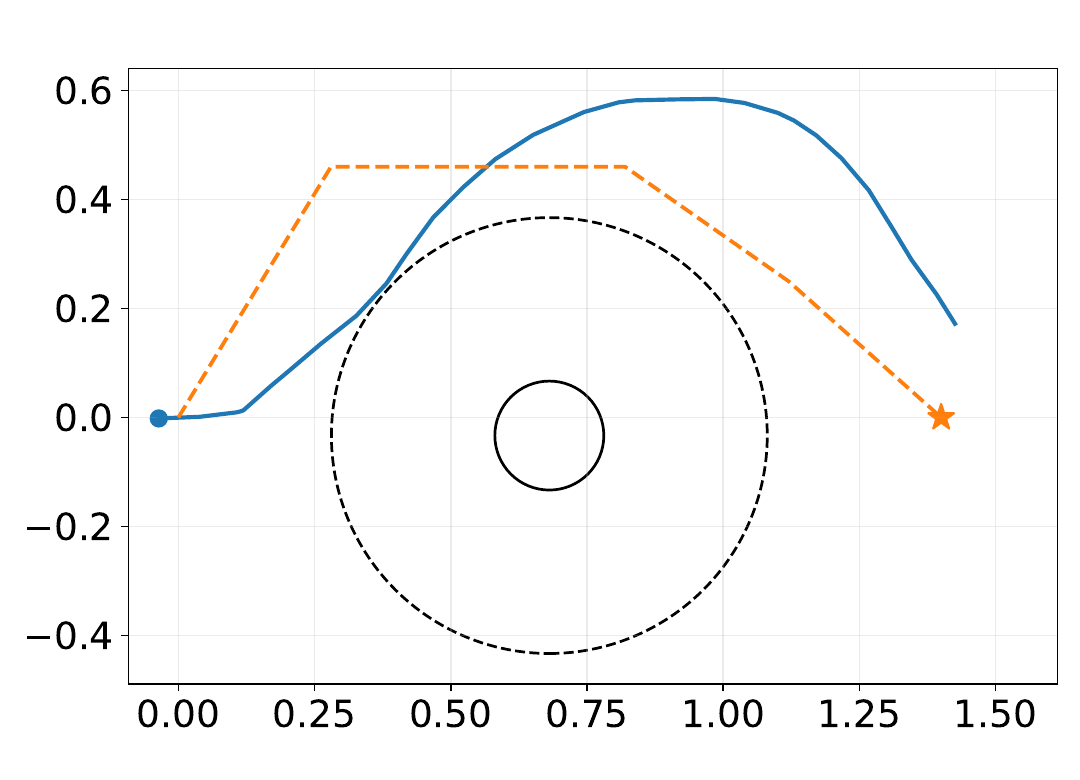}
		\small (b) \texttt{Jackal-like}: seed 37, illustrative-trial cost 25.29.
	\end{minipage}\hfill
	\begin{minipage}[t]{0.32\linewidth}
		\centering
		\includegraphics[width=\linewidth]
		{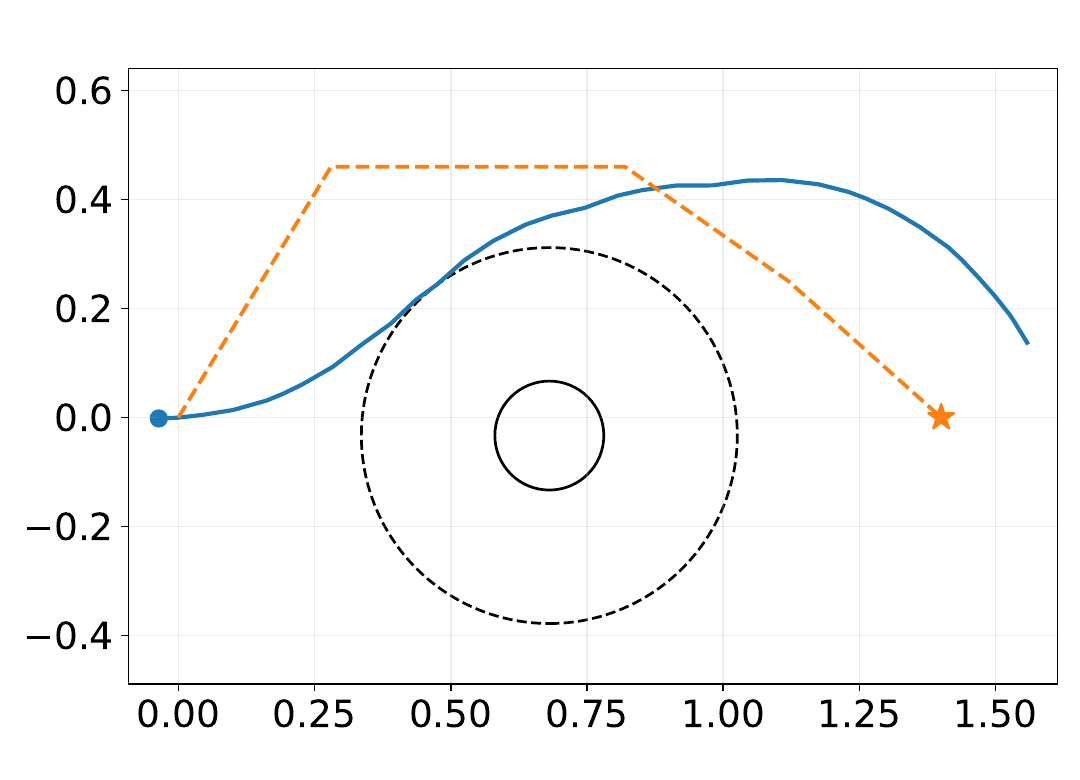}
		\small (c) \texttt{Ackermann}: seed 3, illustrative-trial cost 46.60.
	\end{minipage}
	
	\caption{Representative post-selection closed-loop trajectories for the
		three vehicle models on the common one-obstacle illustration world.
		The solid blue curve denotes the executed robot trajectory, the orange
		dashed polyline denotes the fixed reference route, the blue filled circle
		marks the initial position, and the orange star marks the goal. The solid
		black circle is the physical obstacle and the dashed black circle is the
		inflated collision boundary.}
	\label{fig:statall-three-model-trajectories}
\end{figure}

\subsection{Implementation, models, and common experimental protocol}
\label{sec:simsetup}

The \texttt{MPPI}, \texttt{MA-ES}, \texttt{NMLS}, validation, and statistical analysis
components are implemented in Python using NumPy and SciPy. \texttt{VRBBO}~3.2 is run
from its original \texttt{MATLAB} implementation, with a Python bridge used only to
call the same controller simulator and validation protocol. The simulator is
standalone: it does not include a physics engine, Robot Operating System
(\texttt{ROS}) middleware, simultaneous localization and mapping
(\texttt{SLAM}), or a hardware localization stack. Unit tests check the bounded
parameter map, finite \texttt{MA-ES} updates, exact zero-failure
Clopper--Pearson identity, swept-disk collision geometry, \texttt{MPPI}
reproducibility and actuator bounds, deterministic complete-trial
reproducibility, and the analytic outer cost bound. These tests establish
software consistency; they do not replace the analytical statistical
guarantees developed above.

For each candidate and scenario, the robot state, previous command, estimator
perturbations, and \texttt{MPPI} warm start are reset. Candidates compared in
a common \texttt{MA-ES} population use common random numbers. The same latent
six-dimensional parameterization and the same bounded physical parameter map
are used in all three solvers. In the budget-matched benchmark, every scalar
training evaluation uses $K=4$ complete closed-loop trials, development
selection uses eight fresh trials, and final evaluation uses 32 independent
trials. For this benchmark, the chance-constraint threshold and the
final-validation noncoverage level were fixed before all optimization
runs at $\delta=0.15$ and $\alpha_v=0.05$, respectively. These
same values were used to determine all validation-acceptance entries
reported in the ``Acc.'' column of Table~\ref{tab:three-solver-main}.
The eleven common outer seeds are
\[
\{3,7,11,17,23,29,31,37,41,47,53\}.
\]

The three models are summarized in Table~\ref{tab:three-models}. \texttt{TurtleBot~4}
uses the unicycle model already defined in~\eqref{eq:unicycle} with the
manufacturer-reported command limits used in the earlier study~\cite{turtlebot4manual}. The \texttt{Jackal-like}
model uses the same kinematic form with a larger footprint and higher speed.
The \texttt{Ackermann} benchmark uses a bicycle heading update
\[
\psi_{k+1}=\psi_k+\Delta t\,\frac{v_k}{L_w}\tan\delta_{s,k}.
\]
The two transfer models are controlled kinematic abstractions rather than
manufacturer-specific high-fidelity dynamics.

\begin{table}[t]
	\centering
	\caption{Three vehicle models used in the common benchmark. The second control
		channel is yaw rate for the two unicycle models and steering angle for the
		\texttt{Ackermann} model.}
	\label{tab:three-models}
	\small
	\begin{tabular}{@{}lccccc@{}}
		\toprule
		Model & Kinematics & $v_{\max}$ & Second-control bound & Radius & Wheelbase \\
		\midrule
		\texttt{TurtleBot~4} & unicycle & $0.31$ m/s & $1.90$ rad/s & $0.245$ m & -- \\
		\texttt{Jackal-like} & unicycle & $0.80$ m/s & $1.50$ rad/s & $0.300$ m & -- \\
		\texttt{Ackermann} & bicycle & $0.65$ m/s & $0.60$ rad & $0.245$ m & $0.34$ m \\
		\bottomrule
	\end{tabular}
\end{table}

For the exact-budget benchmark, all models use $T=35$ executed steps,
\texttt{MPPI} horizon $H=8$, $B=18$ sampled command sequences, and one
sampling/update pass per control instant. The outer hard training budget is
432 complete trials. Standard \texttt{MA-ES} therefore fits 12 full
population generations ($9\times4$ trials per generation), while
\texttt{MA-ES+NMLS} may execute fewer generations because line-search points
consume part of the same hard budget. \texttt{VRBBO} uses $n_f^{\max}=108$ scalar
objective evaluations, and each such value uses four complete trials, giving
exactly the same $108\times4=432$ complete-training-trial cap. \texttt{VRBBO} is
optimized in the same latent space; its scalar oracle encodes the same
finite-$K$ feasibility-first ordering, with any one-level risk improvement
dominating the bounded cost range. Consecutive groups of nine \texttt{VRBBO} calls use
a common scenario block, matching the \texttt{MA-ES} population size for the
common-random-number comparison.

\subsection{Solver settings and tuning parameters}
\label{sec:solver-settings}

Both \texttt{MA-ES} variants use the same six-dimensional search and the same
standard matrix-adaptation coefficients. For $n=6$, the standard formulas give
$\lambda=9$, $\mu=4$, recombination weights
\[
(w_1,w_2,w_3,w_4)
=(0.493738,0.281097,0.156710,0.068455),
\]
$\mu_{\rm eff}=2.840610$, $c_s=0.349740$, $c_1=0.035631$,
$c_\mu=0.035686$, $d_\sigma=1.349740$, and
$\chi_6=2.349964$. The initial global scale is $\sigma_0=0.55$, with
$M_0=I_6$ and $s_0=0$. The physical starting vector is
\[
\theta_0=(2,50,0.003,0.06,0.30,1.00),
\]
and the lower/upper bounds are
\[
\theta^-=(0.05,1,10^{-3},0.05,0.01,0.05),\qquad
\theta^+=(20,250,5,5,0.35,1.80).
\]
Table~\ref{tab:maes-settings} collects the settings that differ between the
two \texttt{MA-ES} arms. No model-specific retuning is performed.

\begin{table}[t]
	\centering
	\caption{Frozen optimizer settings in the exact-budget benchmark. Standard
		\texttt{MA-ES} and \texttt{MA-ES+NMLS} share the same matrix-adaptation
		parameters; \texttt{NMLS} changes only the next mean through the selected extrapolation
		multiplier.}
	\label{tab:maes-settings}
	\small
	\begin{tabular}{@{}lcc@{}}
		\toprule
		Setting & \texttt{MA-ES} & \texttt{MA-ES+NMLS} \\
		\midrule
		Search dimension $n$ & 6 & 6 \\
		Population / parents $(\lambda,\mu)$ & $(9,4)$ & $(9,4)$ \\
		$\mu_{\rm eff}$ & 2.840610 & 2.840610 \\
		$c_s$ & 0.349740 & 0.349740 \\
		$c_1$ & 0.035631 & 0.035631 \\
		$c_\mu$ & 0.035686 & 0.035686 \\
		$d_\sigma$ & 1.349740 & 1.349740 \\
		$\chi_6$ & 2.349964 & 2.349964 \\
		Initial scale $\sigma_0$ & 0.55 & 0.55 \\
		Initial $M_0,s_0$ & $I_6,0$ & $I_6,0$ \\
		Hard complete-trial budget & 432 & 432 \\
		Fixed training replications $K$ & 4 & 4 \\
		Maximum generations & 50 & 50 \\
		\texttt{NMLS} enabled & no & yes \\
		$\eta_{\rm nm}$ & -- & 0.90 \\
		$\gamma_{\rm ls}$ & -- & $10^{-4}$ \\
		$\gamma_e$ & -- & 1.50 \\
		$\alpha_0$ & -- & 0.50 \\
		$\alpha_{\max}$ & -- & 100 \\
		$Q_{\max}$ & -- & 64 \\
		Nonmonotone memory & -- & 4 generations \\
		Interpolation/backtracking & -- & disabled \\
		\bottomrule
	\end{tabular}
\end{table}

For \texttt{VRBBO}~3.2 we use the solver's \emph{default tuning values}; no internal
\texttt{VRBBO} tuning parameter is modified or tuned for these experiments. The only
external solver limit is $n_f^{\max}=108$, imposed solely to match the common
432-complete-trial budget. The \texttt{MATLAB} random-number generator is reset with
the same outer seed used to identify each optimization run.

\subsection{Retained update-aware proof-of-execution study}
\label{sec:results}

The original quick-budget \texttt{TurtleBot~4} experiment is retained because it
exercises the confidence-supported risk-to-rank-to-update construction rather
than only the optimizer comparison. It uses ten seeds
$\{3,7,11,17,23,29,31,37,41,47\}$, two \texttt{MA-ES} generations,
racing looks $K\in\{2,4\}$, six development trials, and 16 independent test
trials. For this update-aware study, all statistical and
stopping parameters were fixed before execution.
Specifically, the maximum number of generations
was $G_{\max}=2$, the search noncoverage level was
$\alpha_s=0.10$, and the chance-constraint
threshold was $\delta=0.15$.
The update tolerances were fixed at
\[
(\eta_m,\eta_s,\eta_M,\eta_{\log\sigma})
=
(0.08,0.10,0.12,0.05).
\]
Relative to the prescribed pre-tuning vector, the selected controllers
reduce the mean held-out cost from $75.08$ to $71.93$. The ten run-level paired
mean differences have mean $-3.15$ and 95\% Student-$t$ interval
$[-5.79,-0.51]$; nine of ten run-level differences are negative. Mean goal
completion rises from $0.219$ to $0.331$, with run-level mean-difference
interval $[0.0187,0.2063]$. Neither the selected nor initial controllers
record a collision in their paired 16-trial tests.

The main conclusion of that quick study is diagnostic rather than comparative:
all 20 generations reach the final look unresolved, so no generation satisfies
the rank-certification gate required to activate \texttt{NMLS}. At $K=4$, the allocated
Clopper--Pearson risk intervals cannot certify either feasibility or violation
separation, and the paired Hoeffding cost bound is also vacuous for the declared
global cost width. Thus the quick study verifies execution of the update-aware
logic but does not provide evidence of racing sample-efficiency or of the line
search itself.

\subsection{Retained large-budget \texttt{MA-ES} versus \texttt{NMLS} study}
\label{sec:maes-nmls-ablation}

We also retain the earlier eleven-seed \texttt{TurtleBot~4} large-budget experiment. In
that study, standard \texttt{MA-ES} uses 12 generations and 864 complete
population trials; the \texttt{NMLS} arm uses nine generations and a mean of 846.5
complete training trials because the number of line-search evaluations varies
by seed. The \texttt{NMLS} line search is deliberately activated each generation, so
this experiment isolates the algorithmic effect of the recombination search
rather than the rank-certified Step~(S5) gate. Its completed configuration is
$\eta_{\rm nm}=0.90$, $\gamma_{\rm ls}=10^{-4}$, $\gamma_e=1.50$,
$\alpha_0=1$, $\alpha_{\max}=100$, $Q_{\max}=32$, and
$m_{\rm nm}=4$.

\begin{table}[t]
	\centering
	\caption{Retained eleven-seed large-budget \texttt{TurtleBot~4} comparison. Lower cost
		is better. This study predates the final $\alpha_0=0.5$ transfer preset and
		uses $\alpha_0=1$.}
	\label{tab:retained-large-budget}
	\small
	\begin{tabular}{@{}lcc@{}}
		\toprule
		Quantity & \texttt{MA-ES} & \texttt{MA-ES+NMLS} \\
		\midrule
		Mean held-out cost & 56.3892 & \textbf{55.2408} \\
		Median held-out cost & 62.4401 & \textbf{54.4269} \\
		Best held-out cost & \textbf{39.6413} & 40.0070 \\
		Worst held-out cost & 72.2224 & \textbf{69.4405} \\
		Sample standard deviation & 12.1432 & 12.2301 \\
		Mean goal-completion rate & 0.6383 & \textbf{0.7027} \\
		\texttt{NMLS} better/equal/worse & \multicolumn{2}{c}{$6/0/5$} \\
		Paired \texttt{NMLS}$-$MA-ES mean & \multicolumn{2}{c}{$-1.1484$} \\
		Paired 95\% interval & \multicolumn{2}{c}{$[-7.7169,5.4201]$} \\
		\bottomrule
	\end{tabular}
\end{table}

This retained experiment is descriptively favorable to \texttt{NMLS}: the mean falls by
about $2.04\%$, the median by $12.83\%$, and the worst observed cost by
$3.85\%$. However, the paired interval contains zero and the sample standard
deviations are nearly identical. It therefore supports competitiveness and a
possible large-budget advantage, not statistically significant universal
superiority.

\subsection{Exploratory tuning of the frozen \texttt{NMLS} preset}
\label{sec:nmls-tuning}

The final transfer preset was selected before the \texttt{Jackal-like} and \texttt{Ackermann}
benchmarks and is not retuned by model. The forcing parameter screen
$\gamma_{\rm ls}\in\{10^{-12},10^{-4},10^{-2},0.9\}$ produced the same mean
held-out cost $67.0593$ on the tuning seeds, so we retain the conservative
choice $\gamma_{\rm ls}=10^{-4}$. A disjoint joint screen used
\[
\alpha_0\in\{10^{-3},10^{-2},10^{-1},0.25,0.5,1\},\qquad
\alpha_{\max}\in\{10^2,10^3,10^4\},
\]
on tuning seeds $\{101,103,107,109,113\}$. The mean held-out costs for the six
$\alpha_0$ values, averaged over the three caps, were
$42.8260$, $42.8281$, $43.0122$, $42.8242$, $42.8116$, and $44.5462$,
respectively. Thus $\alpha_0=0.5$ is the best tested value. The three
$\alpha_{\max}$ values are tied because the continuation test stops before the
cap becomes active; a separate eleven-seed sensitivity study reaches the same
conclusion, with largest evaluated multiplier $17.0859$. We therefore retain
$\alpha_{\max}=100$ as the smallest tested conservative cap. Finally, memory
lengths $m_{\rm nm}=1,2,4$ are tied in held-out cost on the memory screen, while
$m_{\rm nm}=4$ requires slightly fewer line-search trials. The resulting frozen
nonmonotone line-search preset is
\[
\boxed{\eta_{\rm nm}=0.90,\quad \gamma_{\rm ls}=10^{-4},\quad
	\gamma_e=1.50,\quad \alpha_0=0.50,\quad \alpha_{\max}=100,\quad
	Q_{\max}=64,\quad m_{\rm nm}=4.}
\]
Interpolation/backtracking is disabled. These values are benchmark-tuned
software settings and are not claimed to be universally optimal.

\subsection{Budget-matched three-solver benchmark}
\label{sec:multimodel-benchmark}

The main new comparison applies standard \texttt{MA-ES}, the frozen
\texttt{MA-ES+NMLS} variant, and \texttt{VRBBO} to all three models under exactly the
same 432-complete-trial training cap. Table~\ref{tab:three-solver-main}
reports the across-seed held-out statistics. Each row summarizes eleven
independent optimization runs, and every optimized controller is tested on 32
fresh final scenarios. Boldface identifies the best value within each model
for a given measure.

\begin{table}[t]
	\centering
	\caption{Budget-matched eleven-seed comparison of standard \texttt{MA-ES},
		\texttt{MA-ES+NMLS}, and \texttt{VRBBO}. Lower mean, median, best, worst, standard
		deviation, and failure count are better; larger validation acceptance is
		better. ``Acc.'' denotes the number of runs satisfying the fixed-sample
		validation rule.}
	\label{tab:three-solver-main}
	\scriptsize
	\resizebox{\linewidth}{!}{%
		\begin{tabular}{@{}llrrrrrrr@{}}
			\toprule
			Model & Solver & Mean & Median & Best & Worst & Std. & Fail. & Acc. \\
			\midrule
			\texttt{TurtleBot~4} & \texttt{MA-ES} & 50.5735 & 49.4997 & 46.9111 & \textbf{56.0679} & \textbf{3.2477} & 18 & 6/11 \\
			& \texttt{MA-ES}+\texttt{NMLS} & \textbf{50.0697} & \textbf{47.5745} & \textbf{46.4424} & 60.0585 & 4.7122 & \textbf{13} & \textbf{8/11} \\
			& \texttt{VRBBO} & 55.0754 & 55.1332 & 47.4499 & 67.3938 & 5.6259 & 22 & 7/11 \\
			\addlinespace
			\texttt{Jackal-like} & \texttt{MA-ES} & 51.2775 & 50.9758 & 46.5686 & \textbf{55.4898} & \textbf{2.8913} & 19 & 6/11 \\
			& \texttt{MA-ES}+\texttt{NMLS} & \textbf{51.1620} & \textbf{49.9713} & \textbf{46.2717} & 57.5392 & 4.1542 & \textbf{8} & \textbf{10/11} \\
			& \texttt{VRBBO} & 54.1265 & 53.1709 & 47.8246 & 59.0920 & 4.2203 & 14 & 6/11 \\
			\addlinespace
			\texttt{Ackermann} & \texttt{MA-ES} & 49.4768 & \textbf{47.1382} & 45.8879 & 60.2530 & 4.3500 & 12 & 7/11 \\
			& \texttt{MA-ES}+\texttt{NMLS} & \textbf{49.2674} & 49.4120 & \textbf{45.8598} & \textbf{53.8389} & \textbf{2.3689} & \textbf{5} & \textbf{10/11} \\
			& \texttt{VRBBO} & 53.5295 & 53.8482 & 46.2714 & 62.5728 & 5.0883 & 13 & 8/11 \\
			\bottomrule
	\end{tabular}}
\end{table}

\texttt{NMLS} has the lowest mean held-out cost on every model. On \texttt{TurtleBot~4} it also
has the lowest median and best cost and fewer failures, while standard
\texttt{MA-ES} has the lower worst-case cost and lower across-seed standard
deviation. On the \texttt{Jackal-like} model the pattern is similar: \texttt{NMLS} leads mean,
median, best cost, failures, and validation acceptance, whereas standard
\texttt{MA-ES} has the better worst-case cost and lower variability. On
\texttt{Ackermann}, \texttt{NMLS} leads mean, best, worst, standard deviation, failure count,
and validation acceptance; standard \texttt{MA-ES} has the better median.
\texttt{VRBBO} does not attain the best value for any of the pooled performance or
risk measures in Table~\ref{tab:three-solver-main} under the matched budget.

Table~\ref{tab:three-solver-paired} gives paired differences on the eleven
common seeds. A negative difference means that the first-named solver has lower cost. For the \texttt{NMLS}--\texttt{MA-ES} comparison, every model-wise interval
contains zero. Hence the new benchmark does not establish a statistically
significant cost difference between the two \texttt{MA-ES} variants. The
\texttt{VRBBO}-minus-\texttt{NMLS} intervals are positive for \texttt{Jackal-like} and \texttt{Ackermann}, while
the TurtleBot interval narrowly includes zero. The model-wise sign tests provide complementary
information about the frequency of pairwise wins
and should be interpreted alongside the paired
confidence intervals. Since these confidence
intervals and tests are not adjusted for multiple
comparisons, the corresponding model-wise findings
are interpreted as individual-comparison evidence
rather than simultaneous statistical guarantees
across all models and solver pairs.

\begin{table}[t]
	\centering
	\caption{Model-wise paired held-out cost differences over the eleven common
		seeds. Differences are computed as the first-named solver minus the second-named solver. W/T/L counts refer to wins/ties/losses of the first-named solver, where a win means lower cost.}
	\label{tab:three-solver-paired}
	\scriptsize
	\resizebox{\linewidth}{!}{%
		\begin{tabular}{@{}llrrrr@{}}
			\toprule
			Model & Comparison & Mean diff. & 95\% CI & W/T/L & Sign $p$ \\
			\midrule
			\texttt{TurtleBot~4} & \texttt{NMLS}$-$\texttt{MA-ES} & -0.5038 & $[-4.0648,3.0572]$ & 5/0/6 & 1.0000 \\
			& \texttt{VRBBO}$-$\texttt{MA-ES} & 4.5019 & $[-0.0025,9.0062]$ & 1/0/10 & 0.0117 \\
			& \texttt{VRBBO}$-$\texttt{NMLS} & 5.0056 & $[-0.3151,10.3263]$ & 2/0/9 & 0.0654 \\
			\addlinespace
			\texttt{Jackal-like} & \texttt{NMLS}$-$\texttt{MA-ES} & -0.1155 & $[-3.9407,3.7097]$ & 6/2/3 & 0.5078 \\
			& \texttt{VRBBO}$-$\texttt{MA-ES} & 2.8490 & $[-0.9823,6.6804]$ & 4/0/7 & 0.5488 \\
			& \texttt{VRBBO}$-$\texttt{NMLS} & 2.9645 & $[0.0544,5.8747]$ & 3/0/8 & 0.2266 \\
			\addlinespace
			\texttt{Ackermann} & \texttt{NMLS}$-$\texttt{MA-ES} & -0.2094 & $[-2.4735,2.0548]$ & 5/0/6 & 1.0000 \\
			& \texttt{VRBBO}$-$\texttt{MA-ES} & 4.0527 & $[-0.8067,8.9121]$ & 3/0/8 & 0.2266 \\
			& \texttt{VRBBO}$-$\texttt{NMLS} & 4.2621 & $[0.5446,7.9796]$ & 3/0/8 & 0.2266 \\
			\bottomrule
	\end{tabular}}
\end{table}

Pooling the 33 model--seed pairs descriptively gives the summary in
Table~\ref{tab:three-solver-pooled}. \texttt{NMLS} has the best pooled mean, median,
best, worst, failure count, and validation-acceptance count. Standard
\texttt{MA-ES} has the smallest pooled across-seed standard deviation,
$3.5197$ versus $3.8350$ for \texttt{NMLS} and $4.8955$ for \texttt{VRBBO}. \texttt{NMLS} reduces the
pooled mean by $0.55\%$ relative to standard \texttt{MA-ES}. \texttt{VRBBO} has
$7.54\%$ higher pooled mean cost than standard \texttt{MA-ES} and $8.13\%$
higher pooled mean cost than \texttt{NMLS}.

\begin{table}[t]
	\centering
	\caption{Descriptive pooled statistics over all 33 model--seed pairs. The
		pooling is used as a cross-model descriptive summary, not as a substitute for
		the model-wise paired analyses.}
	\label{tab:three-solver-pooled}
	\small
	\begin{tabular}{@{}lrrrrrrr@{}}
		\toprule
		Solver & Mean & Median & Best & Worst & Std. & Fail. & Acc. \\
		\midrule
		\texttt{MA-ES} & 50.4426 & 49.8447 & 45.8879 & 60.2530 & \textbf{3.5197} & 49 & 19/33 \\
		\texttt{MA-ES}+\texttt{NMLS} & \textbf{50.1664} & \textbf{49.0934} & \textbf{45.8598} & \textbf{60.0585} & 3.8350 & \textbf{26} & \textbf{28/33} \\
		\texttt{VRBBO} & 54.2438 & 54.2357 & 46.2714 & 67.3938 & 4.8955 & 49 & 21/33 \\
		\bottomrule
	\end{tabular}
\end{table}

The pooled paired \texttt{NMLS}-minus-\texttt{MA-ES} mean difference is $-0.2762$ with
95\% interval $[-1.9578,1.4053]$ and W/T/L count $16/2/15$. In contrast,
\texttt{VRBBO} minus standard \texttt{MA-ES} has mean difference $3.8012$ with 95\%
interval $[1.5290,6.0734]$ and W/T/L $8/0/25$, while \texttt{VRBBO} minus \texttt{NMLS} has
mean difference $4.0774$ with interval $[1.9568,6.1981]$ and W/T/L
$8/0/25$. Both pooled sign tests comparing \texttt{VRBBO} with a \texttt{MA-ES}
variant give $p=0.0046$. These cross-model summaries support the conclusion
that, under the common 432-trial budget, both matrix-adaptation variants are
more effective than \texttt{VRBBO} on this benchmark family. They do not imply a
universal ranking outside the declared models, parameterization, and budget.

The risk-active final tests provide a second axis of comparison. Across
$33\times32=1056$ final trials per solver, standard \texttt{MA-ES}, \texttt{NMLS},
and \texttt{VRBBO} record 49, 26, and 49 failures, respectively. Thus \texttt{NMLS} has 46.9\%
fewer observed failures than either comparator. The fixed-sample validation
rule is satisfied in 19/33 standard-\texttt{MA-ES} runs, 28/33 \texttt{NMLS} runs,
and 21/33 \texttt{VRBBO} runs. These pooled counts are descriptive; the formal
validation statement remains run-specific because each optimized controller
has its own 32-trial final test.

\subsection{Graphical comparison}
\label{sec:graphical-comparison}

To reduce page count, related plots are grouped into multi-panel figures.
Figure~\ref{fig:stat-cost-family} compares three distributional measures for
the budget-matched eleven-seed benchmark. The mean panel shows the same
ordering on all three models: \texttt{NMLS} is slightly below standard
\texttt{MA-ES}, while \texttt{VRBBO} is clearly higher. The median panel shows
that the \texttt{NMLS} advantage is strongest on \texttt{TurtleBot~4} and
\texttt{Jackal-like}, whereas standard \texttt{MA-ES} has the lower
\texttt{Ackermann} median. The standard-deviation panel further shows that the
mean advantage should not be interpreted as uniform dominance: standard
\texttt{MA-ES} is less variable on \texttt{TurtleBot~4} and
\texttt{Jackal-like}, while \texttt{NMLS} is markedly less variable on
\texttt{Ackermann}. In all three panels, lower values are preferable.

Taken together, Figure~\ref{fig:stat-cost-family} shows that \texttt{NMLS}
attains the lowest mean held-out cost on every vehicle model, whereas the
across-seed variability advantage depends on the model. The horizontal axis in
each panel lists \texttt{TurtleBot~4}, \texttt{Jackal-like}, and
\texttt{Ackermann}; within each model group, the left, middle, and right bars
correspond to standard \texttt{MA-ES}, \texttt{MA-ES+NMLS}, and
\texttt{VRBBO}, respectively. The three vertical coordinates are the mean
held-out cost, median held-out cost, and across-seed sample standard deviation.

The corresponding model-specific per-seed comparisons are collected in
Figure~\ref{fig:stat-seed-family}. In every panel, the horizontal axis is the
optimization seed and the vertical axis is the independent-test mean cost of
the controller produced by that seed; lower values are preferable. Standard
\texttt{MA-ES}, \texttt{MA-ES+NMLS}, and \texttt{VRBBO} are represented,
respectively, by the blue solid curve with circle markers, the orange dashed
curve with square markers, and the green dotted curve with triangle markers.
The frequent crossings between the two \texttt{MA-ES} variants expose
optimizer randomness and are consistent with their model-wise paired
confidence intervals containing zero. \texttt{VRBBO} is occasionally
competitive on individual seeds, but its curve lies above one or both
matrix-adaptation variants for most seeds; this is reflected by only eight
\texttt{VRBBO} wins in each pooled 33-pair comparison. These per-seed
variations also show why no conclusion should be based on a single favorable
optimization seed.

\begin{figure}[!http]
	\centering
	\begin{minipage}[t]{0.32\linewidth}
		\centering
		\includegraphics[width=\linewidth]{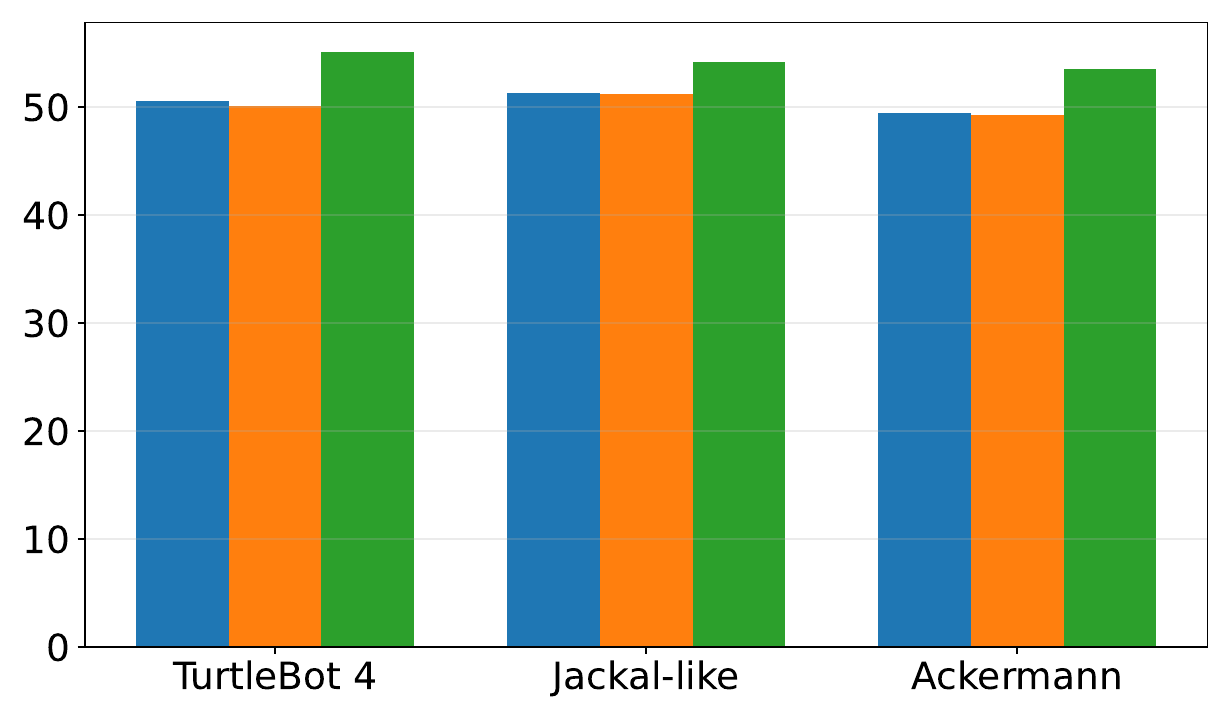}\\[-0.5ex]
		\footnotesize (a) Mean held-out cost
	\end{minipage}\hfill
	\begin{minipage}[t]{0.32\linewidth}
		\centering
		\includegraphics[width=\linewidth]{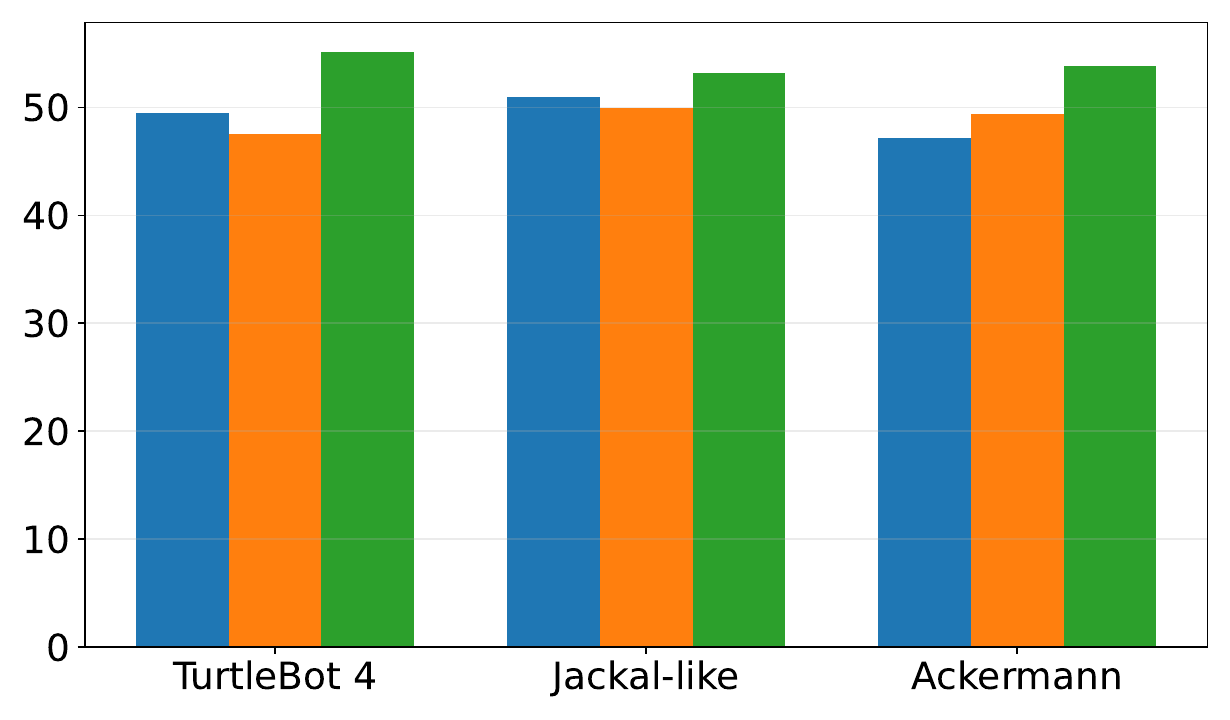}\\[-0.5ex]
		\footnotesize (b) Median held-out cost
	\end{minipage}\hfill
	\begin{minipage}[t]{0.32\linewidth}
		\centering
		\includegraphics[width=\linewidth]{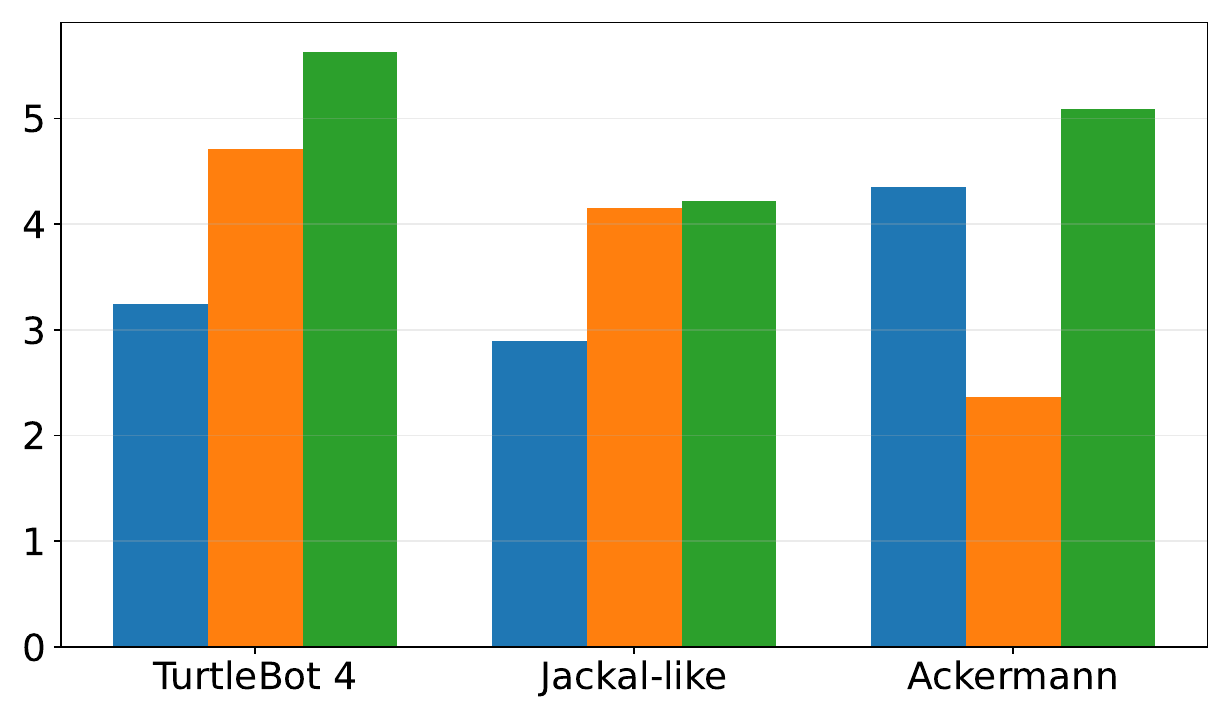}\\[-0.5ex]
		\footnotesize (c) Across-seed standard deviation
	\end{minipage}
	\caption{Cost-distribution statistics for the budget-matched eleven-seed
		benchmark: (a) mean held-out cost, (b) median held-out cost, and
		(c) across-seed sample standard deviation. Within each model group,
		the bars from left to right denote standard \texttt{MA-ES},
		\texttt{MA-ES+NMLS}, and \texttt{VRBBO}.}
	\label{fig:stat-cost-family}
\end{figure}

\begin{figure}[!http]
	\centering
	\begin{minipage}[t]{0.32\linewidth}
		\centering
		\includegraphics[width=\linewidth]{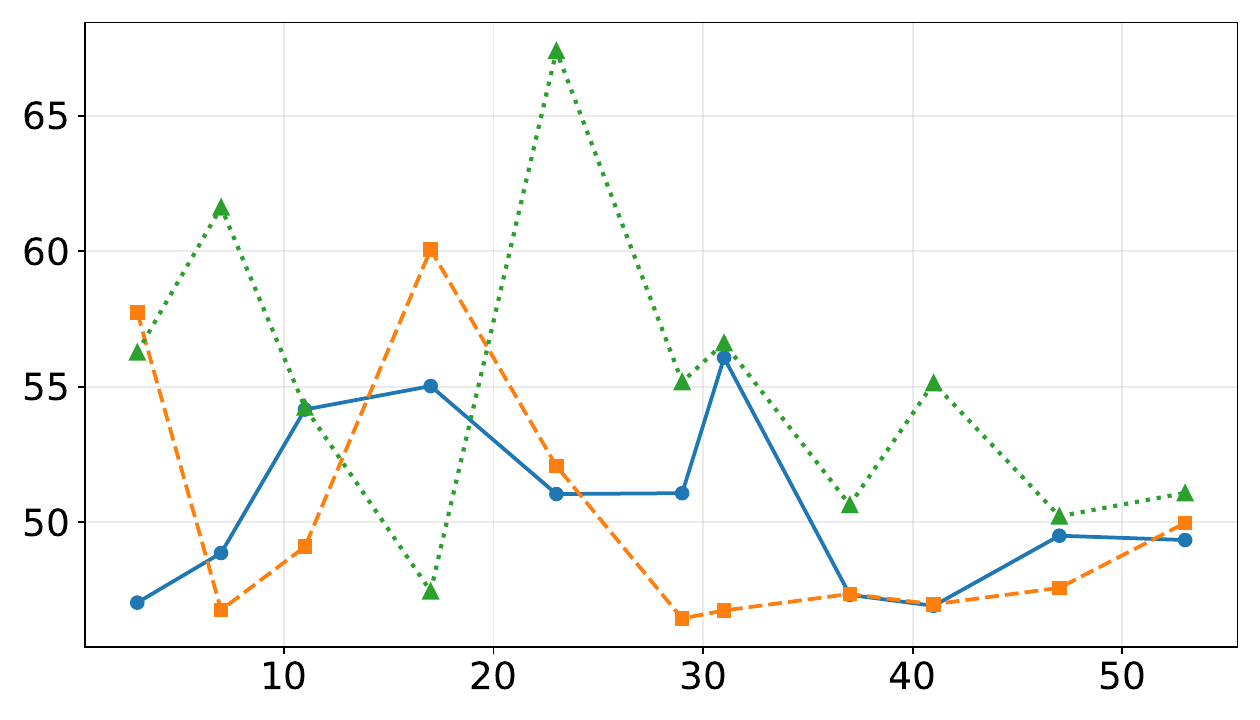}\\[-0.5ex]
		\footnotesize (a) \texttt{TurtleBot~4}
	\end{minipage}\hfill
	\begin{minipage}[t]{0.32\linewidth}
		\centering
		\includegraphics[width=\linewidth]{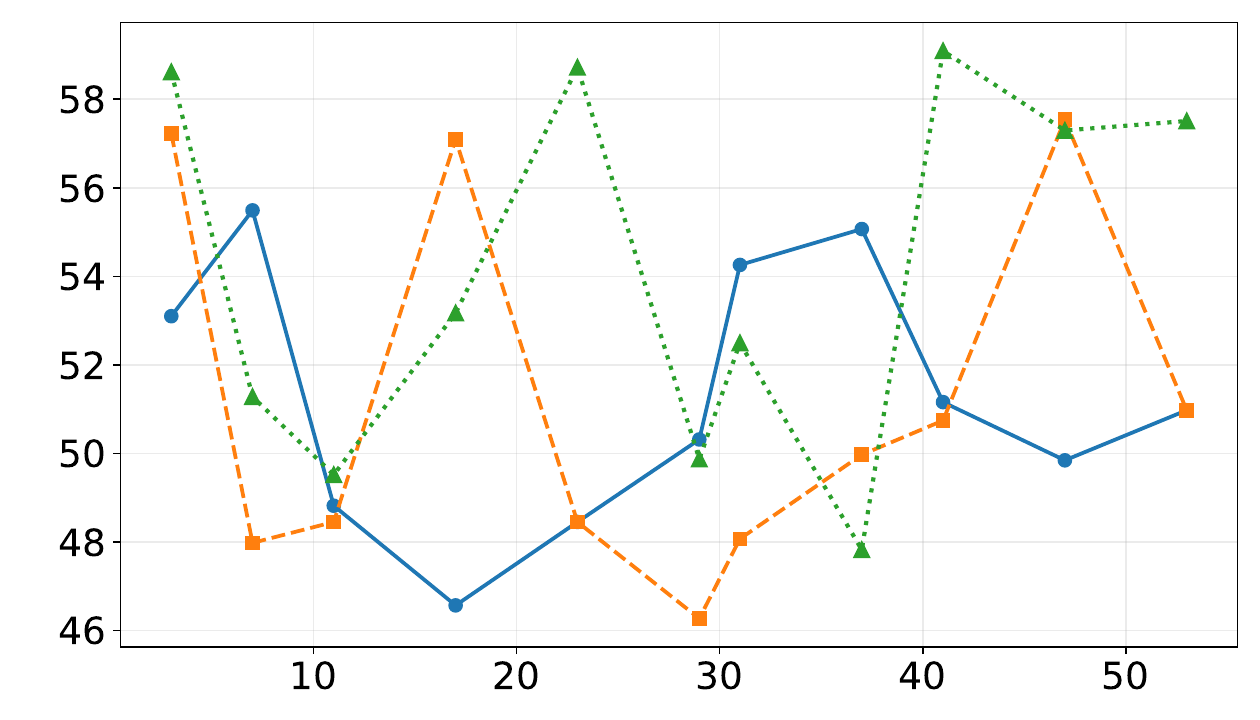}\\[-0.5ex]
		\footnotesize (b) \texttt{Jackal-like}
	\end{minipage}\hfill
	\begin{minipage}[t]{0.32\linewidth}
		\centering
		\includegraphics[width=\linewidth]{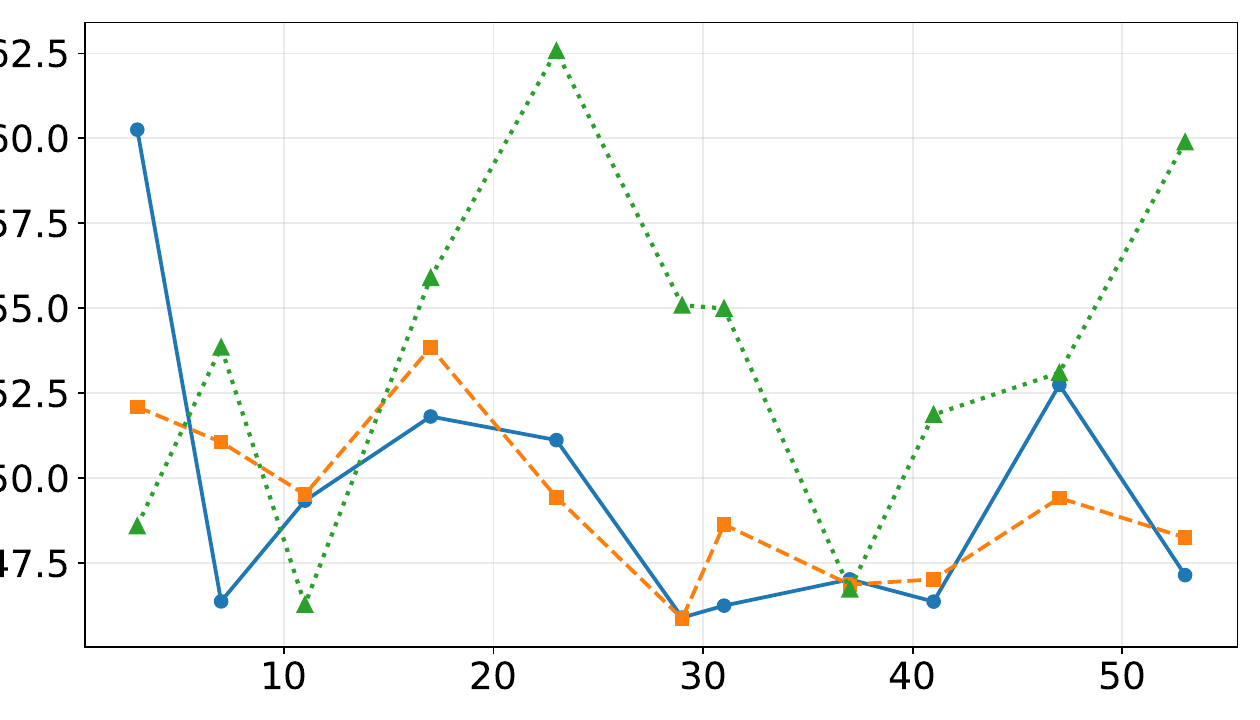}\\[-0.5ex]
		\footnotesize (c) \texttt{Ackermann}
	\end{minipage}
	\caption{Per-seed held-out-cost comparisons for
		(a) \texttt{TurtleBot~4}, (b) \texttt{Jackal-like}, and
		(c) \texttt{Ackermann}. The curves denote standard \texttt{MA-ES},
		\texttt{MA-ES+NMLS}, and \texttt{VRBBO}, respectively.}
	\label{fig:stat-seed-family}
\end{figure}

Figure~\ref{fig:stat-risk-family} groups the risk and pooled-order summaries.
In panels~(a) and~(b), the horizontal axis lists \texttt{TurtleBot~4},
\texttt{Jackal-like}, and \texttt{Ackermann}, with the left, middle, and right
bars corresponding to standard \texttt{MA-ES}, \texttt{MA-ES+NMLS}, and
\texttt{VRBBO}, respectively. Panel~(a) reports the total number of final-test
failures accumulated over the eleven independently optimized controllers for
each model, so lower values are better; \texttt{NMLS} has fewer observed
failures on every model. Panel~(b) reports the number of runs, out of 11,
satisfying the fixed-sample validation rule, so higher values are better;
\texttt{NMLS} gives the largest number of validation-accepted runs on every
model. In panel~(c), the horizontal axis is the pooled order statistic from
1 (best) to 33 (worst) across all model--seed pairs and the vertical axis is
the independent-test mean cost. The blue solid, orange dashed, and green
dotted curves denote standard \texttt{MA-ES}, \texttt{MA-ES+NMLS}, and
\texttt{VRBBO}, respectively. The sorted pooled-cost curves show a similar
performance ordering over most of the distribution and a noticeably heavier
upper tail for \texttt{VRBBO}. The small separation between the two
\texttt{MA-ES} curves is consistent with the nonsignificant
\texttt{NMLS}-versus-\texttt{MA-ES} paired interval, whereas the larger
\texttt{VRBBO} separation is consistent with the positive pooled paired
intervals reported above. Thus, the risk and pooled-cost panels consistently
favor \texttt{NMLS} on the observed failure and validation measures while
showing the larger upper-tail cost of \texttt{VRBBO} under the matched budget.

\begin{figure}[!http]
	\centering
	\begin{minipage}[t]{0.32\linewidth}
		\centering
		\includegraphics[width=\linewidth]{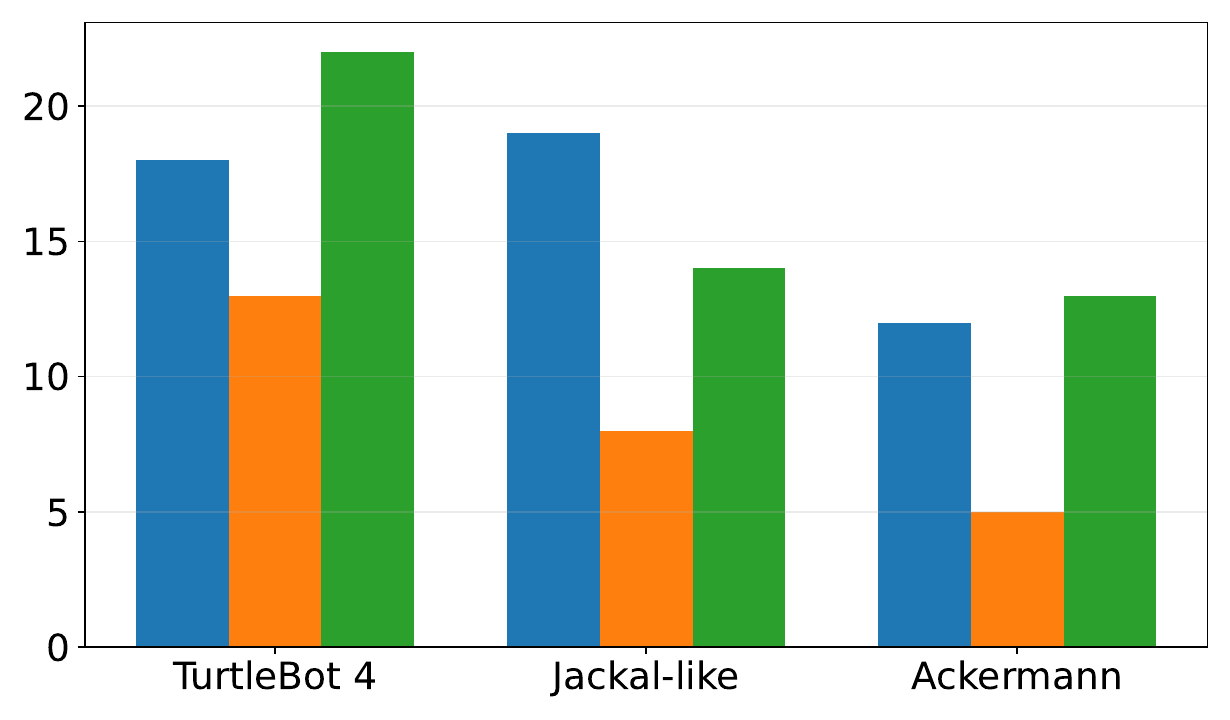}\\[-0.5ex]
		\footnotesize (a) Final-test failures
	\end{minipage}\hfill
	\begin{minipage}[t]{0.32\linewidth}
		\centering
		\includegraphics[width=\linewidth]{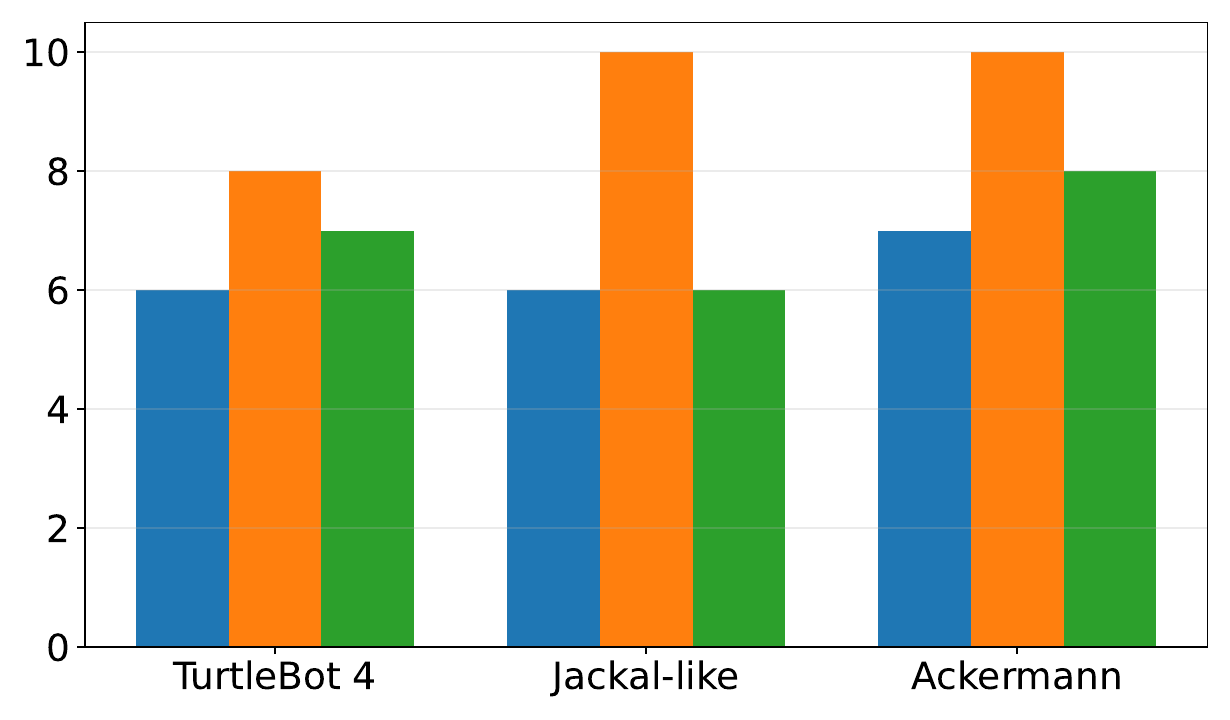}\\[-0.5ex]
		\footnotesize (b) Validation-accepted runs
	\end{minipage}\hfill
	\begin{minipage}[t]{0.32\linewidth}
		\centering
		\includegraphics[width=\linewidth]{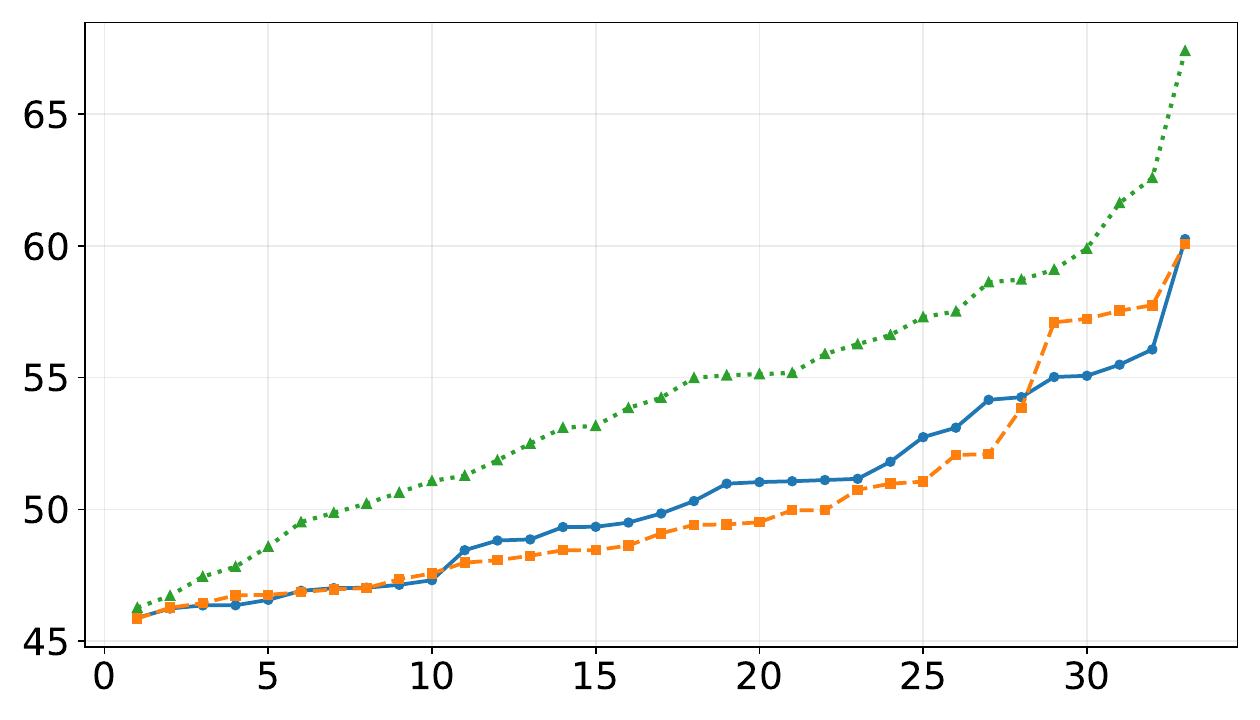}\\[-0.5ex]
		\footnotesize (c) Pooled sorted costs
	\end{minipage}
	\caption{Risk and pooled-performance summaries:
		(a) final-test failures, (b) validation-accepted runs, and
		(c) sorted held-out costs pooled over the 33 model--seed pairs.
		In panels~(a) and~(b), bars from left to right denote standard
		\texttt{MA-ES}, \texttt{MA-ES+NMLS}, and \texttt{VRBBO}; the same
		solver ordering is used for the three curves in panel~(c).}
	\label{fig:stat-risk-family}
\end{figure}

\clearpage

\subsection{Measure-wise recommendation}
\label{sec:solver-recommendation}

The numerical evidence does not support a single solver being best for every
possible criterion. The recommendation in Figure~\ref{fig:recommendation}
therefore separates performance/risk, variability, and the overall
budget-matched choice. Under the present benchmark, \texttt{MA-ES}+\texttt{NMLS} is the
preferred default because it has the best pooled mean, median, best and worst
costs, the fewest observed failures, and the largest validation-acceptance
count. Standard \texttt{MA-ES} is the preferred alternative when the primary
criterion is pooled across-seed standard deviation, where it is the most
stable of the three. \texttt{VRBBO} is not preferred under the 432-trial budget: it has
the highest pooled mean, median, worst cost, and standard deviation, and the
pooled paired differences favor both matrix-adaptation variants. This is a
budget- and benchmark-specific recommendation, not a universal ranking of
black-box optimization methods.

\begin{figure}[t]
	\centering
	\begin{tikzpicture}[x=1cm, y=1cm, font=\small, >=stealth]
		\tikzset{
			header/.style={draw, rounded corners=4pt, fill=gray!15, font=\bfseries\small, minimum width=#1, minimum height=0.8cm},
			cell/.style={draw, rounded corners=4pt, minimum width=#1, minimum height=0.95cm, align=center},
			algogreen/.style={cell=3.1cm, fill=green!20, font=\ttfamily},
			algoblue/.style={cell=3.1cm, fill=blue!15, font=\ttfamily},
			algoorange/.style={cell=3.1cm, fill=orange!20, font=\ttfamily},
			flow/.style={->, thick, gray!70}
		}
		
		\node[header=4.0cm] (h1) at (-1,3.0) {Measure};
		\node[header=3.1cm] (h2) at (3.7,3.0) {Best choice};
		\node[header=5.8cm] (h3) at (9,3.0) {Evidence in this benchmark};
		
		\node[cell=4.0cm] (r1c1) at (-1,1.9) {Mean / median / tail cost};
		\node[algogreen] (r1c2) at (3.7,1.9) {MA-ES+NMLS};
		\node[cell=5.8cm] (r1c3) at (9,1.9) {Best pooled mean, median, best, worst};
		
		\node[cell=4.0cm] (r2c1) at (-1,0.8) {Failure / validation};
		\node[algogreen] (r2c2) at (3.7,0.8) {MA-ES+NMLS};
		\node[cell=5.8cm] (r2c3) at (9,0.8) {26 failures; 28/33 accepted runs};
		
		\node[cell=4.0cm] (r3c1) at (-1,-0.3) {Across-seed variability};
		\node[algoblue] (r3c2) at (3.7,-0.3) {MA-ES};
		\node[cell=5.8cm] (r3c3) at (9,-0.3) {Lowest pooled std dev: 3.5197};
		
		\node[cell=4.0cm] (r4c1) at (-1,-1.4) {Matched-budget overall};
		\node[algogreen] (r4c2) at (3.7,-1.4) {MA-ES+NMLS};
		\node[cell=5.8cm] (r4c3) at (9,-1.4) {Recommended default;\\ \texttt{MA-ES} second; \texttt{VRBBO} third};
		
		\node[algogreen, minimum width=3.6cm, minimum height=0.9cm] (f1) at (1.3,-2.8) {1. MA-ES+NMLS};
		\node[algoblue, minimum height=0.9cm] (f2) at (5.2,-2.8) {2. MA-ES};
		\node[algoorange, minimum height=0.9cm] (f3) at (8.9,-2.8) {3. VRBBO};
		
		\draw[flow] (r1c1) -- (r1c2); \draw[flow] (r1c2) -- (r1c3);
		\draw[flow] (r2c1) -- (r2c2); \draw[flow] (r2c2) -- (r2c3);
		\draw[flow] (r3c1) -- (r3c2); \draw[flow] (r3c2) -- (r3c3);
		\draw[flow] (r4c1) -- (r4c2); \draw[flow] (r4c2) -- (r4c3);
		
		\draw[flow] (r4c2.south) -- ++(0,-0.25) -| (f1.north);
		
		\draw[flow] (f1) -- (f2);
		\draw[flow] (f2) -- (f3);
		
	\end{tikzpicture}
	\caption{Measure-wise recommendation for the declared 432-complete-trial benchmark. The overall ordering summarizes the present data only; it is not a universal solver ranking.}
	\label{fig:recommendation}
\end{figure}
\subsection{Discussion and limitations}

The new three-solver benchmark strengthens two conclusions while preserving
the earlier caveats. First, the frozen \texttt{NMLS} parameters transfer without
model-specific retuning: \texttt{NMLS} has the lowest mean held-out cost on \texttt{TurtleBot~4},
\texttt{Jackal-like}, and \texttt{Ackermann}. Second, the comparison with \texttt{VRBBO} shows that the
matrix-adaptation population mechanism is particularly effective under the
present small complete-trial budget. Pooled across the 33 matched model--seed
pairs, the reported paired intervals exclude zero in favor of both
\texttt{MA-ES} variants relative to \texttt{VRBBO}, and \texttt{VRBBO} wins only eight of
33 paired comparisons against either variant. Because this pooling combines
three benchmark families, we treat it as a cross-model descriptive summary
rather than as a universal inferential statement. The result therefore provides a
concrete numerical reason to retain \texttt{MA-ES} as the outer-loop framework
for this application.

The evidence for \texttt{NMLS} relative to standard \texttt{MA-ES} is more nuanced.
\texttt{NMLS} leads the pooled performance and validation measures and has lower mean
cost on every model, but its pooled paired interval versus standard
\texttt{MA-ES} contains zero and the standard method has the smallest pooled
across-seed standard deviation. The appropriate statement is therefore that
\texttt{NMLS} is the recommended default under the present multi-measure benchmark,
not that it is uniformly or statistically significantly superior to standard
\texttt{MA-ES} on every criterion.

The quick update-aware experiment and the exact-budget optimizer benchmark
measure different aspects of the method. The quick study evaluates the
confidence-supported racing mechanism but is too small for any generation to
be certified. The exact-budget three-solver benchmark is risk-active and
compares optimizer behavior fairly, but it uses fixed-$K$ empirical
feasibility-first evaluation to isolate the outer search. A future experiment
should combine a risk-active benchmark, a racing budget large enough to permit
certification, and the three-solver comparison under a common total simulation
budget. Hardware dynamics, latency, state-estimation dynamics, distribution
shift, global optimality, and closed-loop stability are not established by the
present kinematic simulations.


\section{Conclusion and limitations}
\label{sec:conclusion}

We developed a risk-constrained, update-aware \texttt{MA-ES}
framework for tuning \texttt{MPPI} controllers from noisy
closed-loop evaluations. The proposed approach connects
statistical confidence in population rankings with explicit
bounds on the evolutionary updates, incorporates a
derivative-free nonmonotone line search when the
recombination direction is certified, and independently
validates the final controller. A worst-case
arithmetic-complexity analysis also characterizes the
computational requirements of online control and offline
tuning.

Numerical experiments on three kinematic vehicle models
demonstrate the transferability of the proposed
\texttt{NMLS} refinement without model-specific retuning.
Under a common evaluation budget, \texttt{MA-ES+NMLS}
achieves lower mean held-out costs than standard
\texttt{MA-ES} on all three models, reduces the pooled
mean cost from $50.4426$ to $50.1664$, and increases
the number of validation-accepted runs from $19/33$
to $28/33$. Both \texttt{MA-ES} variants also achieve
lower pooled mean costs than the tested \texttt{VRBBO}
configuration. These findings support the practical
competitiveness and transferability of the framework,
although the observed \texttt{NMLS} cost improvements
over standard \texttt{MA-ES} are not statistically
significant under the reported paired comparisons.

The experiments also reveal an important limitation
of the statistical certification mechanism. Under
the small racing budgets used in the update-aware study,
no generation satisfies the rank-certification condition.
Consequently, the available experiments establish the
executability of the complete framework and provide
evidence for the effectiveness of the nonmonotone
refinement, but do not yet demonstrate the practical
benefit of certificate-triggered line-search activation.

Future work should investigate larger racing budgets,
risk-active environments, more challenging disturbances,
and additional robotic platforms. Hardware experiments
and real-time implementation studies are also needed,
as the present results do not establish closed-loop
stability, global optimality, or general safety guarantees.

\section*{Funding}

M.K. acknowledges financial support of the Austrian Science Foundation under \url{https://doi.org/10.55776/PAT2747625}.



\end{document}